\documentclass[12pt, reqno]{amsart}
\usepackage{paper_style}

\usepackage[csfont, paper]{std_math}
\usepackage{local}
\usepackage{changing_Macros}

\makeatletter
\@addtoreset{enumi}{section}
\makeatother

\hypersetup{pdfauthor={Segev Gonen Cohen}
	,pdftitle=Discontinuous actions on manifolds
	,urlcolor=blue
	,citecolor=red
    ,linkcolor=blue
	,colorlinks=true}

\begin{document}

\title{Continuum-many discontinuous actions of $\SL_{n+1}(\bbQ)$ on manifolds}
\author{Segev Gonen Cohen}
\address{Department of Mathematics, ETH Zürich, Zürich, Switzerland}
\email{segev.gonencohen@math.ethz.ch}

\thanks{The author acknowledges the support of FNS (Fonds National Suisse, no. 10.003.145), and the ANR (Agence Nationale de la Recherche, ANR-24-CE93-0016).}

\subjclass{Primary 22F05; Secondary 22E40, 37C85, 57S25}

\date{\today.}
\begin{abstract}
    For any $n\geq 1$, we construct a continuum of non-conjugate actions by homeomorphisms of $\SL_{n+1}(\bbQ)$ on the $n$-sphere $S^{n}$ that do not extend to a continuous action of $\SL_{n+1}(\bbR)$. We do this by adapting a blow-up construction of Bowden and Mann. We motivate studying these actions and note down some related observations.
\end{abstract}
\maketitle

\tableofcontents

\section{Introduction}

\noindent The purpose of this note is to show that discontinuous actions by homeomorphisms of $\SL_{n+1}(\bbQ)$ on manifolds are abundant. In all that follows, we will refer to a homomorphism from the $\bbQ$-points of a $\bbQ$-group to a topological group as \emph{continuous} if it is continuous with respect to the topology induced from $\bbR$.

\begin{theo}
\label{theo:uncountably_many}
    For any $n \geq 1$ there are $2^{\aleph_0}$-many pairwise non-conjugate discontinuous homomorphisms $\rho:\SL_{n+1}(\bbQ) \to \Homeo(S^n)$.
\end{theo}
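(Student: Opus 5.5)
Starting point: the standard projective action of $\SL_{n+1}(\bbR)$ on $\bbR P^n$, or equivalently the linear action on the sphere of rays $S^n = (\bbR^{n+1}\setminus\{0\})/\bbR_{>0}$, restricted to $\SL_{n+1}(\bbQ)$. The idea is to blow up a single orbit of $\SL_{n+1}(\bbQ)$ in a controlled way, following Bowden and Mann.

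\textbf{Step 1 (the orbit).} Pick a point $p\in S^n$ whose ray is spanned by a vector $v\in\bbR^{n+1}$ with "generic" coordinates, say coordinates linearly independent over $\bbQ$. The orbit $\SL_{n+1}(\bbQ)\cdot p$ is countable and dense. For such $v$ the stabilizer in $\SL_{n+1}(\bbQ)$ should be trivial, or at least explicitly controlled; this is to be checked.

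\textbf{Step 2 (the blow-up).} Replace each orbit point $q=g\cdot p$ by a closed ball $B_q$ and remove its interior neighbourhood. Concretely, take the "real blow-up" $S^n\setminus\{q\}\cup S(T_qS^n)$, which is again homeomorphic to $S^n$ minus an open ball. Then glue a ball back in and make the sizes of the inserted balls summable, with diameters $\epsilon_q$ and $\sum\epsilon_q<\infty$, as in the Denjoy/Bowden--Mann construction, so that the result is again a manifold $M\cong S^n$.

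The derivative action of $g$ gives homeomorphisms $S(T_qS^n)\to S(T_{gq}S^n)$. The action on the inserted balls is extended by coning, chosen equivariantly by fixing the identification on one representative of the orbit.

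\textbf{Step 3 (the parameter).} To obtain continuum many actions, let the data of the blow-up depend on a parameter. Natural choices are the choice of $p$ among continuum-many orbits, or a twisting of the ball identifications by a homomorphism $\chi$ from a stabilizer into $\Homeo(B^n)$.

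Conjugacy invariants then distinguish the actions. The set of points with nontrivial "wandering" behaviour, namely the union of interiors of the $B_q$, is canonical, being the interior of the complement of the minimal set. Any conjugacy must therefore map the collapsed quotient action, which is the standard action, to itself, up to an automorphism. By rigidity of the standard action, such a conjugacy would be a projective map sending orbit to orbit. Since only countably many $\SL_{n+1}(\bbQ)$-orbits are related by rational projective maps together with the finitely many field automorphisms, continuum many choices of $p$ give pairwise non-conjugate actions.

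\textbf{Step 4 (discontinuity).} Discontinuity holds because a sequence $g_k\to 1$ in $\SL_{n+1}(\bbQ)$ with $g_k$ moving $p$ to nearby orbit points still moves the balls $B_p$ to different, disjoint balls. Hence $\rho(g_k)$ does not converge to the identity in $\Homeo$.

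\textbf{Main obstacle.} The hardest step is making the glued-in action genuinely continuous on $M$, including the correct topology near accumulating small balls. This requires showing that $g$ acts uniformly continuously on the blown-up space, via derivative control of projective maps. I would first try to handle it by choosing the $\epsilon_q$ compatibly with the derivative cocycle, e.g. $\epsilon_{gq}$ comparable to $\|Dg_p\|$-weighted summable weights, as Bowden--Mann do.
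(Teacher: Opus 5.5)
Your route is correct in outline, but it differs from the paper's. The paper fixes the orbit of $x=e_1$, whose stabiliser $H$ is a large rational parabolic. It gets the continuum by twisting the derivative action of $H$ on the inserted disc $D_x$ by a discontinuous character $\psi:\bbQ_{>0}\to\bbR_{>0}$. This is why it needs the strengthened Proposition~\ref{prop:Bowden_mann_specified}, which prescribes the interior maps through the stabiliser cocycle. Discontinuity is then visible inside the single disc $D_x$: elements $h_k\to\Id$ of $H$ act on $D_x$ by $\Theta_{C_k\Id}$ with $C_k\to 0$ or $\infty$. Non-conjugacy is read off from the kernel of $s\mapsto\psi(s)^ns^{-n-1}$, after using that $U$ fixes $D_x$ pointwise to show every semi-conjugacy to $\sigma$ is $\pm\tau$.

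You instead vary the blown-up orbit among continuum many orbits with trivial stabiliser. This has three consequences:
\begin{itemize}
\item You only need the original Proposition~\ref{prop:Bowden_Mann}. Trivial stabilisers are exactly the case Bowden--Mann prove, so your ``main obstacle'' is settled by citing them, as the paper does.
\item Discontinuity is the disc-shuffling argument of Corollary~\ref{cor:discontinuous_action}.
\item The conjugacy invariant is the blown-up orbit up to sign.
\end{itemize}
The price is that no stabiliser fixes $D_p$ pointwise, so the $U$-trick is unavailable and you need a global argument that conjugacies respect the discs. In exchange, the paper's route shows that the dynamics on a single disc already carries continuum many invariants, and the same machinery gives the $S^2$ example of Section~\ref{sect:Sl_2_on_S2}.

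Two things need tightening.

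\textbf{Genericity of $p$.} $\bbQ$-linear independence of the coordinates does not give a trivial stabiliser. For $n=1$, the ray of $(1,\sqrt2)$ is an eigenvector of $(\begin{smallmatrix}3&2\\4&3\end{smallmatrix})$ with eigenvalue $3+2\sqrt2>0$. Ask instead that the affine coordinates $1,\alpha_1,\dots,\alpha_n$ be linearly independent over $\overline{\bbQ}$, e.g.\ algebraically independent. Then if $gv=\lambda v$ with $g\neq\lambda\Id$, a nonzero row of $g-\lambda\Id$ gives a linear relation with coefficients in $\bbQ(\lambda)\subset\overline{\bbQ}$, a contradiction. So $g=\lambda\Id$, and $\lambda\in\bbQ_{>0}$ with $\lambda^{n+1}=1$ forces $g=\Id$. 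These points form a comeagre set and each orbit is countable, so continuum many orbits remain.

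\textbf{The ``canonical minimal set'' and ``rigidity'' steps.} Both need proof, and the minimal-set claim also requires control of the blow-up topology near $\partial D_q$. The following argument avoids minimal sets.
\begin{itemize}
\item Let $\beta$ be any semi-conjugacy from $\hat\rho_p$ to $\sigma$, and take pairwise distinct $g_k\to\Id$. Since the stabiliser is trivial, the $g_kq$ are pairwise distinct.
\item Then $\operatorname{diam} D_{g_kq}\to 0$, so $\operatorname{diam}\beta(D_{g_kq})\to 0$ by uniform continuity of $\beta$. But $\beta(D_{g_kq})=\sigma(g_k)\beta(D_q)\to\beta(D_q)$.
\item Hence $\beta$ is constant on each $D_q$ and factors as $\bar\beta\circ\tau_p$ with $\bar\beta$ continuous.
\item By density, $\bar\beta$ commutes with $\sigma(\SL_{n+1}(\bbR))$. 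So $\bar\beta(y)$ is fixed by the real stabiliser of $y$, forcing $\bar\beta(y)\in\{\pm y\}$. By connectedness, $\bar\beta=\pm\Id$.
\item Taking $\beta=\tau_{p'}\circ\gamma$ for a conjugacy $\gamma$ gives $\gamma(D_q)=\tau_{p'}^{-1}(\epsilon q)$ for a fixed sign $\epsilon$. This fibre must be a nondegenerate disc, so $Y_{p'}=\pm Y_p$.
\end{itemize}
No automorphisms of $\SL_{n+1}(\bbQ)$ or of $\bbQ$ (which has none) enter.
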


\noindent As motivation for studying these actions, we begin by recalling a special case of the classical Theorem of Borel and Tits. Recall that a connected $\bbQ$-group $\bbG$ is said to be \emph{almost simple} if it has no proper closed normal subgroups of strictly positive dimension, and \emph{absolutely almost simple} if $\bbG(\bbC)$ is almost simple. Recall in addition that a $\bbQ$-group $\bbG$ is \emph{$\bbQ$-isotropic} if it has a $\bbQ$-split torus of strictly positive dimension. A key example to keep in mind for both properties is $\SL_{n+1}$, for $n \geq 1$.

\begin{theo}[{\cite{BorelTits1973}}, special case]
\label{theo:Borel_Tits}
    Let $\bbG$ be an absolutely almost simple, simply connected, $\bbQ$-isotropic $\bbQ$-group. Let $\bbH$ be an absolutely simple $\bbQ$-group. Then every abstract group homomorphism
    $$\phi:\bbG(\bbQ) \to \bbH(\bbR)$$
    with Zariski-dense image is the restriction to $\bbG(\bbQ)$ of an $\bbR$-defined isogeny
    \begin{equation*}
        \tilde{\phi}:\bbG(\bbR) \to \bbH(\bbR).
    \end{equation*}
    In particular, $\phi$ is continuous.
\end{theo}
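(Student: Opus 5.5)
The plan is the classical route of Borel and Tits, specialised to a global field of characteristic zero. The guiding idea is that $\phi$ should be recovered from its restrictions to unipotent subgroups, where abstract homomorphisms are forced to be algebraic. The order is: reduce to unipotent radicals of opposite parabolics, show $\phi$ sends them to unipotent elements, algebraise on them, glue along the big cell, and finally identify the result as an isogeny.

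\emph{Step 1 (generation).} Since $\bbG$ is $\bbQ$-isotropic, fix a maximal $\bbQ$-split torus $\bbS$ and a pair of opposite minimal $\bbQ$-parabolics $\bbP^\pm$ with unipotent radicals $\bbU^\pm$. Simple connectedness together with the Kneser--Tits property over $\bbQ$ gives that $\bbG(\bbQ)$ is generated by $\bbU^+(\bbQ)\cup\bbU^-(\bbQ)$. For the main example $\SL_{n+1}$ this is just generation by elementary matrices; in general I will quote it. It therefore suffices to control $\phi$ on $\bbU^\pm(\bbQ)$.

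\emph{Step 2 (unipotents go to unipotents).} Let $u\in\bbU_a(\bbQ)$ lie in a root group. Choose $s\in\bbS(\bbQ)$ with $a(s)=2$, so that $s u s^{-1}=u^{2}$ (in a rank-one root group this is meant literally; for nonreduced or higher root groups, use the filtration by root height). Then $\phi(u)$ is conjugate to $\phi(u)^2$, so the eigenvalues of $\phi(u)$ form a set stable under squaring. Every eigenvalue is therefore a root of unity, and some power $\phi(u)^m$ is unipotent. Since $u$ is divisible in $\bbU(\bbQ)$, we can write $u=(u^{1/m})^m$, which puts $\phi(u)$ itself among the $m$-th powers. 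Combining this with the Jordan decomposition and with connectedness of the Zariski closure of $\phi(\bbU(\bbQ))$, which is a solvable group normalised by the image of $\bbS(\bbQ)$, one concludes that $\phi(\bbU^\pm(\bbQ))$ consists of unipotent elements. This is the step I expect to be the main obstacle, particularly making the torus argument uniform across all root groups.

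\emph{Step 3 (algebraicity on unipotents and gluing).} Let $\mathfrak u$ be the Lie algebra of $\bbU^+$. Through $\log$ and $\exp$, $\phi|_{\bbU^+(\bbQ)}$ becomes a Baker--Campbell--Hausdorff-compatible map from $\mathfrak u(\bbQ)$ into a nilpotent real Lie algebra. Induction on the lower central series reduces this to additive maps between $\bbQ$-vector spaces, and such maps are automatically $\bbQ$-linear. Hence $\phi|_{\bbU^\pm(\bbQ)}$ is the restriction of an $\bbR$-morphism $\bbU^\pm\to\bbH$; the same argument, using that $\phi(s)$ normalises these images, handles $\bbS$ and the anisotropic kernel of $\bbZ(\bbS)$.

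Define a rational map $\tilde\phi$ on the big cell $\bbU^-\bbP^+$ as the product of these morphisms. Zariski density of $\bbG(\bbQ)$ in $\bbG$ shows that $\tilde\phi$ is a rational homomorphism. A standard lemma of Weil then extends it to an $\bbR$-morphism of algebraic groups $\bbG\to\bbH$ that agrees with $\phi$ on the generators, hence on all of $\bbG(\bbQ)$.

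\emph{Step 4 (isogeny).} The image of $\tilde\phi$ is closed and contains the Zariski-dense set $\phi(\bbG(\bbQ))$, so $\tilde\phi$ is surjective. Its kernel is a normal subgroup of $\bbG$ that is proper, because $\bbH$ is nontrivial. Absolute almost simplicity of $\bbG$ then forces the kernel to be finite, so $\tilde\phi$ is an $\bbR$-isogeny. Continuity of $\phi$ follows immediately, since the restriction of a morphism of real algebraic groups is continuous in the real topology.
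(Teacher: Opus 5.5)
The paper gives no proof of this statement; it is quoted from Borel--Tits. Your outline follows the broad shape of their argument specialised to $\bbQ\subset\bbR$: their field homomorphism is forced to be the inclusion, which is what your ``additive implies $\bbQ$-linear'' observation captures.

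There is, however, a genuine gap in Step 3. The claim that ``the same argument'' handles $\mathbb{S}$ and the anisotropic kernel is false. The $\log/\exp$ linearisation only works for uniquely divisible nilpotent groups, and nothing intrinsic to $Z_{\bbG}(\mathbb{S})(\bbQ)$ forces algebraicity. Already $\mathbb{S}(\bbQ)\cong(\bbQ^\times)^r$ has continuum many non-algebraic homomorphisms to $\bbR_{>0}$; these are precisely the characters $\psi$ exploited in Section~\ref{sect:higher_rank}. Without a morphism on $Z_{\bbG}(\mathbb{S})$, your big-cell map $\tilde\phi$ is not even defined.

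The missing idea is that $\phi(z)$ is pinned down by its conjugation action on the already algebraic pieces. This is exactly where the hypothesis on $\bbH$ enters; your proof never uses it beyond nontriviality.
\begin{itemize}
\item For $z\in Z_{\bbG}(\mathbb{S})(\bbQ)$, the identity $\phi(z)\tilde\phi_\pm(u)\phi(z)^{-1}=\tilde\phi_\pm(zuz^{-1})$ holds on the Zariski-dense set $\mathbb{U}^\pm(\bbQ)$, hence on all of $\mathbb{U}^\pm$. Therefore $\operatorname{Ad}\phi(z)\bigl(d\tilde\phi_\pm(X)\bigr)=d\tilde\phi_\pm\bigl(\operatorname{Ad}(z)X\bigr)$.
\item The group generated by $\tilde\phi_+(\mathbb{U}^+)\cup\tilde\phi_-(\mathbb{U}^-)$ is closed, connected and contains $\phi(\bbG(\bbQ))$, so it equals $\bbH$. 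Its Lie algebra, generated by the $d\tilde\phi_\pm(\operatorname{Lie}\mathbb{U}^\pm)$, is therefore $\operatorname{Lie}(\bbH)$.
\item Hence $z\mapsto\operatorname{Ad}\phi(z)$ is polynomial in $z$ and extends to a morphism $Z_{\bbG}(\mathbb{S})\to\operatorname{Ad}(\bbH)$.
\item Since $\bbH$ is absolutely simple, its centre is trivial, so $\operatorname{Ad}$ is an isomorphism onto its image. This gives the required morphism on $Z_{\bbG}(\mathbb{S})$.
\end{itemize}
An alternative is to work with the Zariski closure of the graph of $\phi$ in $\bbG\times\bbH$, and use simplicity of $\bbH$ to show it meets $1\times\bbH$ trivially. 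With this supplied, your big-cell gluing, Weil's extension lemma and Step 4 go through as written.

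Step 2 is less of an obstacle than you expect, but it needs repairs.
\begin{itemize}
\item An $s$ with $a(s)=2$ need not exist: for $\SL_2$, $a(\mathbb{S}(\bbQ))$ consists of squares. Any integer $N\geq 2$ in $a(\mathbb{S}(\bbQ))$ will do.
\item The divisibility trick needs a bound on the order of the semisimple part of $\phi(u)$ that is uniform in $u$. This holds because $\lambda\mapsto\lambda^N$ permutes the at most $d$ eigenvalues in a faithful $d$-dimensional representation. So all of them have order dividing $M=N^{d!}-1$, and then $\phi(u)=\phi(u^{1/M})^M$ is unipotent.
\item A group generated by unipotents need not be unipotent, so you still owe an argument for $\phi(\mathbb{U}^+(\bbQ))$. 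Its Zariski closure is nilpotent and connected (a finite quotient of a divisible group is trivial), hence of the form $T\times A_u$. Here $T$ is trivial, because the projection to $T$ kills the unipotent generators while having Zariski-dense image.
\item For multipliable $a$, first treat $\mathbb{U}_{(2a)}$. Then use $sus^{-1}=u^Nz$ with $z\in\mathbb{U}_{(2a)}(\bbQ)$ central in $\mathbb{U}_{(a)}$, so $\phi(z)$ is a commuting unipotent that does not change the semisimple part.
\end{itemize}
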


\noindent Recently, there has been an extended body of research in proving `non-linear' analogues of results known to hold for linear groups. Here by `non-linear' we specifically consider groups of diffeomorphisms of compact manifolds. See for example Hurtado's work on the Burnside problem in diffeomorphism groups \cite{hurtado2020burnside} and the progress on the Zimmer programme \cite{brown_zimmers_2020, BrownFisherHurtado2020, BrownFisherHurtado2021preprint, Cantat2017Bourbaki1136, brown_entropy_2019, fisher_recent_2019} as important results in this direction.

It seems natural therefore to ask to what extent the theorem of Borel and Tits transfers to the non-linear setting. Below $M$ will always denote a compact connected manifold with no boundary. $\Diff^1(M)$ will denote the group of $C^1$-diffeomorphisms of it equipped with the $C^1$-topology, and $\Homeo(M)$ the group of homeomorphisms of $M$ equipped with the compact-open topology. We will use $_+$ to indicate that the maps we are considering are all orientation-preserving. 

\begin{conjecture}
\label{conj:extensions_of_Actions}
    Let $\bbG$ be an absolutely almost simple, simply connected $\bbQ$-group, and $M$ a compact manifold. Let $\alpha: \bbG(\bbQ) \to \Diff^1(M)$ be a homomorphism. Then it extends up to finite index to a continuous homomorphism $\tilde{\alpha}:\bbG(\bbR) \to\Diff^1(M)$. In particular, up to finite-index, $\alpha$ is continuous.
\end{conjecture}

\noindent Note that here, in contrast to Theorem~\ref{theo:Borel_Tits}, we have not assumed that $\bbG$ is $\bbQ$-isotropic. In fact, in the case that $\rank{\bbQ}{\bbG} = 0$, we have some partial results, for manifolds of a low enough dimension. We give just one example, deferring to \cite{cohen2026actionslatticespadicsarithmetic, Gonen_Cohen_semisimple} for a description of known results.

\begin{theo}[{\cite{Gonen_Cohen_semisimple}}] \label{theo:Q_groups}
    Let $n\geq 2$, and let $\bbG$ be a simple $\bbQ$-group with $\rank{\bbQ}{\bbG} = 0$ such that $\bbG(\bbR) \cong \SL_{n+1}(\bbR) \times \SL_{n+1}(\bbC)$. Let $M$ be a compact manifold of dimension $\dim M < 2n$. Let $\alpha:\bbG(\bbQ) \to \Diff^3(M)$ be a homomorphism. Then it extends up to finite index to a continuous homomorphism $\tilde{\alpha}:\SL_{n+1}(\bbR) \to \Diff^2(M)$.
\end{theo}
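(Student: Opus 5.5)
The plan is to pass through an arithmetic lattice inside $\bbG(\bbQ)$, use rigidity theory for higher-rank lattices to get a continuous action of the real factor $\SL_{n+1}(\bbR)$, and then use commensurability to see that the whole of $\bbG(\bbQ)$ acts through that same continuous action. Throughout, $\pi_\bbR$ denotes the projection of $\bbG(\bbR)\cong\SL_{n+1}(\bbR)\times\SL_{n+1}(\bbC)$ onto its first factor.

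\textbf{Step 1: the lattice and its projection.} Fix an arithmetic subgroup $\Gamma=\bbG(\bbZ)$, which lies in $\bbG(\bbQ)$. Since $\rank{\bbQ}{\bbG}=0$, Godement's criterion makes $\Gamma$ a cocompact lattice in $\bbG(\bbR)$. Since $\bbG$ is $\bbQ$-simple, $\Gamma$ is irreducible, so $\pi_\bbR(\Gamma)$ is dense in $\SL_{n+1}(\bbR)$. The same holds for every finite-index subgroup of $\Gamma$. The lattice has higher rank: $\rank{\bbR}{\bbG}=2n$.

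\textbf{Step 2: killing the complex factor.} Restrict $\alpha$ to $\Gamma$ and study the derivative cocycle on a suspension space. As $\dim M<2n$, Zimmer's cocycle superrigidity (in the measurable form used by Brown--Fisher--Hurtado, or through the invariant-measure arguments of Brown--Rodriguez Hertz--Wang) makes the derivative cocycle, over any ergodic invariant measure, cohomologous up to a compact-valued error to a linear representation of $\bbG(\bbR)$ of dimension $<2n$. Such a representation of $\SL_{n+1}(\bbR)\times\SL_{n+1}(\bbC)$ is trivial on $\SL_{n+1}(\bbC)$: its smallest nontrivial real representation has dimension $2(n+1)$. Hence only $\SL_{n+1}(\bbR)$ can contribute nonzero Lyapunov exponents, and these are the exponents of a representation of dimension at most $2n-1$.

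\textbf{Step 3: extending the lattice action (main obstacle).} I then want a finite-index $\Gamma'\le\Gamma$ and a continuous action $\tilde\alpha:\SL_{n+1}(\bbR)\to\Diff^2(M)$ with $\alpha|_{\Gamma'}=\tilde\alpha\circ\pi_\bbR$. I would try the route below first.
\begin{enumerate}
\item Use the exponent constraints of Step 2 to find $\alpha(\Gamma')$-invariant measures.
\item Use the parabolic subgroups of $\SL_{n+1}(\bbR)$ to build a measurable, then continuous, equivariant map from $M$ to (covers of) projective-type homogeneous spaces $\SL_{n+1}(\bbR)/Q$.
\item Show that in dimension $<2n$ this map is a $C^2$ conjugacy, on a finite cover or on open pieces, to a standard projective model, after excluding the case where the action has finite image.
\end{enumerate}
I expect this step to be the heart of the matter. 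It needs the $C^3$ hypothesis to obtain $C^2$ regularity of the conjugacy, and it must handle the possibility that $M$ decomposes into a finite-image part and a projective part.

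\textbf{Step 4: from $\Gamma'$ to $\bbG(\bbQ)$.} For $g\in\bbG(\bbQ)$, the group $g\Gamma' g^{-1}$ is commensurable with $\Gamma'$, and the twisted homomorphism $\gamma\mapsto\alpha(g)\,\alpha(g^{-1}\gamma g)\,\alpha(g)^{-1}$ agrees with $\alpha$ on $\Gamma'$. Both $\tilde\alpha\circ\pi_\bbR$ and $\gamma\mapsto\alpha(g)\,\tilde\alpha\bigl(\pi_\bbR(g^{-1}\gamma g)\bigr)\,\alpha(g)^{-1}$ are continuous and agree on $\Gamma'\cap g\Gamma' g^{-1}$, whose $\pi_\bbR$-image is dense by Step 1. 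Hence they coincide, which gives
\begin{equation*}
\alpha(g)\,\tilde\alpha(h)\,\alpha(g)^{-1}=\tilde\alpha\bigl(\pi_\bbR(g)\,h\,\pi_\bbR(g)^{-1}\bigr)\quad\text{for all } h\in\SL_{n+1}(\bbR).
\end{equation*}
Consequently $\alpha(g)\,\tilde\alpha(\pi_\bbR(g))^{-1}$ centralizes $\tilde\alpha(\SL_{n+1}(\bbR))$. In the projective model this centralizer is finite, so the map $g\mapsto\alpha(g)\,\tilde\alpha(\pi_\bbR(g))^{-1}$ has finite image. Passing to a finite-index subgroup of $\bbG(\bbQ)$ on which it is trivial shows that $\alpha$ agrees with $\tilde\alpha\circ\pi_\bbR$ there, which is the required extension up to finite index.
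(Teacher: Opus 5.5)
This theorem is only quoted in the paper (from \cite{Gonen_Cohen_semisimple}), so there is no in-paper proof to compare with. Judged on its own, your proposal has a genuine gap: it sits in Step~3 and carries over into Step~4.

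\textbf{Step 3.} Step~3 is not a step of a proof. It is the lattice case of the theorem itself, and items (1)--(3) are a programme with no argument attached. Worse, its stated endpoint, a $C^2$-conjugacy to a projective model, is false in the range $n<\dim M<2n$ that the hypothesis allows. Two examples:
\begin{enumerate}
\item Embed $\SL_{n+1}(\bbR)\hookrightarrow\SL_{n+2}(\bbR)$ and act projectively on $S^{n+1}$, as in Section~\ref{sect:Sl_2_on_S2} for $n=1$. The orbits are two fixed points, the equatorial projective sphere, and two open orbits on each of which the action is the linear action on $\bbR^{n+1}\setminus\{0\}$.
\item Take $S^n\times N$ with $N$ closed and $1\le\dim N\le n-1$. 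Let $\SL_{n+1}(\bbR)$ act projectively on the first factor and trivially on $N$.
\end{enumerate}
Composing with $\pi_\bbR$ gives smooth actions of $\bbG(\bbQ)$ that satisfy every hypothesis. They are not projective models on any finite cover or invariant open piece. So the lattice case cannot rest on a classification into projective models. You need a mechanism that produces $\tilde\alpha$ directly, for instance continuity of $\alpha|_{\Gamma'}$ in the topology pulled back by $\pi_\bbR$ followed by extension by density. That is exactly where $\dim M<2n$ and the $\SL_{n+1}(\bbC)$ factor must be used in earnest.

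Item (1) also fails on the basic examples. For $\sigma\circ\pi_\bbR$ on $S^n$, or for example 2, there is no $\alpha(\Gamma')$-invariant probability measure. Its push-forward to $S^n$ would be invariant under the closure of $\pi_\bbR(\Gamma')$, which is $\SL_{n+1}(\bbR)$, and no such measure exists. For the same reason Step~2, which only speaks about invariant measures, gives no control in these cases.

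\textbf{Step 4.} The density argument giving $\alpha(g)\tilde\alpha(h)\alpha(g)^{-1}=\tilde\alpha(\pi_\bbR(g)h\pi_\bbR(g)^{-1})$ is correct. It is the same mechanism as in the paper's proof of Proposition~\ref{prop:C^r_regularity}. There, however, \cite{brown_boundary_2024} supplies the projective model in dimension exactly $n$, and Lemma~\ref{lem:centraliser_lemma} makes the centraliser finite. In general the centraliser $Z$ of $\tilde\alpha(\SL_{n+1}(\bbR))$ is infinite. In example 2 it contains $\Diff^2(N)$. In example 1 it contains the one-parameter group induced by $\diag(\lambda\Id_{n+1},\lambda^{-n-1})$, $\lambda>0$.

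What your identity actually gives is that $c(g):=\tilde\alpha(\pi_\bbR(g))^{-1}\alpha(g)$ is a homomorphism $\bbG(\bbQ)\to Z$. It is trivial on $\Gamma'$, hence on the normal closure of $\Gamma'$ in $\bbG(\bbQ)$. To conclude that $c$ has finite image, you need this normal closure to have finite index in $\bbG(\bbQ)$. Margulis' normal subgroup theorem, applied to each $\bbG(\bbZ[1/m])$, only shows that the quotient is locally finite. Upgrading this to finiteness is a further arithmetic input about normal subgroups of the anisotropic group $\bbG(\bbQ)$, and your proposal does not address it.
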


\noindent The construction of a group $\bbG$ satisfying the assumptions of Theorem~\ref{theo:Q_groups} can be done by restriction of scalars, see for example \cite{Margulis1991}. Notice that in this case, $\bbG(\bbQ)$ is different to $\SL_{n+1}(\bbQ)$; and $\bbG$ is not absolutely almost-simple. We record in Proposition~\ref{prop:C^r_regularity} the best known statement for the group $\SL_{n+1}(\bbQ)$ when differentiability is assumed, which follows from work of Brown, Rodriguez-Hertz, and Wang \cite{brown_boundary_2024}.

We now turn our attention to actions by homeomorphisms. Recall that if $X_1,X_2$ are topological spaces endowed with group actions $\rho_i:G \to \Homeo(X_i)$, a \emph{(topological) semi-conjugacy} between $\rho_1$ and $\rho_2$ is a continuous surjective map $\tau:X_1\to X_2$ such that for every $g\in G$ we have that $\tau\circ\rho_1(g) = \rho_2(g) \circ \tau$. In this case we say that $\rho_1$ is \emph{left semi-conjugate} to $\rho_2$, and $\rho_2$ is \emph{right semi-conjugate} to $\rho_1$.
We say that $\tau$ is a \emph{(topological) conjugacy} if it is a homeomorphism.

\begin{prop}[{\cite[Proposition 4.1]{BowdenMann2022}}]
\label{prop:Bowden_Mann}
    Let $k\geq 1$ and consider the $k$-sphere $S^k$. Let $G$ be a countable group, and $\rho:G \to \Diff^1_+(S^k)$ an action with a dense orbit $Y$. Then there exists some $\rho':G \to \Homeo_+(S^k)$ and a continuous semi-conjugacy $\tau:S^k \to S^k$ between $\rho'$ and $\rho$ such that 
    \begin{enumerate}[label = {(\roman*)}]
        \item For every $y\in Y$ the fiber $D_y :=\tau^{-1}(y)$ is a closed topological disc, and
        \item $\tau$ is one-to-one on $S^k\backslash\tau^{-1} (Y)$.
    \end{enumerate}
\end{prop}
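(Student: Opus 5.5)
We will prove the statement by a blow-up of the orbit $Y$: replace each point $y\in Y$ by a closed round ball and extend the action to these balls using the derivative of $\rho$. Enumerate the countable set $Y=\{y_1,y_2,\dots\}$ and pick radii $r_i>0$ with $\sum_i r_i<\infty$. Write $\mathbb{D}$ for the closed unit disc in $\bbR^k$. As a set, let
$$X=(S^k\setminus Y)\sqcup\bigsqcup_{i} D_{y_i},$$
where $D_{y_i}$ is a copy of $r_i\mathbb{D}$, to be identified with the closed ball of radius $r_i$ in the unit tangent-ray compactification of $T_{y_i}S^k$. The map $\tau:X\to S^k$ is the identity off the discs and collapses $D_{y_i}$ to $y_i$.

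\emph{Realising $X$ as a sphere.} We will build $X$ as a limit of finite blow-ups. Let $X_m$ be $S^k$ with the first $m$ points replaced by small discs, i.e.\ a connected sum of $S^k$ with $m$ copies of $D^k$ glued along the sphere of directions. Each $X_m$ is homeomorphic to $S^k$; concretely, $X_m$ is obtained by inserting discs of radius $r_i$ via a radial push in shrinking coordinate neighbourhoods. The natural maps $X_{m+1}\to X_m$ are cellular. Since the radii are summable and the neighbourhoods shrink, the inverse limit is homeomorphic to $S^k$ by Brown's theorem on inverse limits of near-homeomorphisms, or alternatively by Moore/Bing shrinking for the decomposition of $S^k$ into the points $y_i$ (a null, cellular upper-semicontinuous decomposition). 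With the topology defined this way, $\tau$ is continuous and surjective, its fibres over $Y$ are the discs $D_y$, and it is injective elsewhere, which gives (i) and (ii).

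\emph{Defining the action.} For $g\in G$, set $\rho'(g)=\rho(g)$ on $S^k\setminus Y$; this is well defined since $Y$ is an orbit and hence invariant. On $D_{y_i}$, let $\rho'(g)$ map to $D_{\rho(g)y_i}$ by
$$v\mapsto \frac{r_j}{r_i}\,\frac{D\rho(g)_{y_i}v}{\|D\rho(g)_{y_i}v\|}\,\|v\|\quad\text{(after rescaling)},$$
where $y_j=\rho(g)y_i$. This is the projectivised derivative on boundary directions, coned radially. The chain rule gives $\rho'(gh)=\rho'(g)\rho'(h)$, and the semi-conjugacy relation $\tau\circ\rho'(g)=\rho(g)\circ\tau$ holds by construction.

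\emph{Continuity, the main obstacle.} The key step is checking that each $\rho'(g)$ is continuous on $X$. We need that if $x_n\to y_i$ from direction $u$, i.e.\ $x_n$ converges in $X$ to the boundary point $u\in\partial D_{y_i}$, then $\rho(g)x_n$ approaches $\rho(g)y_i$ from direction $D\rho(g)u/\|D\rho(g)u\|$. This is exactly where $C^1$ is used: by Taylor's theorem $\rho(g)(y+w)-\rho(g)(y)=D\rho(g)_y w+o(|w|)$, so directions converge.

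We also must handle sequences of points in other discs $D_{y_j}$ accumulating at $y_i$. These discs shrink by summability, and since they sit in the direction they converge from, we need the topology on $X$ to be defined so that a basic neighbourhood of $u\in\partial D_{y_i}$ consists of the cone $\{y_i+w : w/|w|\text{ near }u,\ |w|\text{ small}\}$ together with all discs $D_{y_j}$ whose centres lie in that cone. Continuity of $\rho'(g)$ then follows because $\rho(g)$ maps such cones into such cones, and $\rho'(g)$ maps whole discs to whole discs.

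Finally, $\rho'(g^{-1})$ inverts $\rho'(g)$, so each $\rho'(g)$ is a homeomorphism. Orientation is preserved because the derivative of an orientation-preserving map has positive determinant, so its radial extension preserves orientation.
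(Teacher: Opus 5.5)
Your proposal is correct and is essentially the Bowden--Mann blow-up that the paper cites and relies on in Step~1 of Proposition~\ref{prop:Bowden_mann_specified}: each $y\in Y$ becomes a disc with $\partial D_y\cong\bbP^+(T_yS^k)$ on which $G$ acts by the projectivised derivative, and your radial coning of $[D\rho(g)]$ is the natural tangent-space filling that also handles nontrivial stabilisers, which the paper says Bowden--Mann only remark on. The one step to tidy is identifying $X$ with $S^k$. Rely only on Brown's inverse-limit theorem, which works because each bonding map collapses a single collared ball and is therefore a near-homeomorphism. Drop the Moore--Bing alternative, which as written applies to the wrong space (the decomposition should be of $X$, not $S^k$). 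Drop also the appeal to summability of the $r_i$, which plays no role in the topology. Finally, check that your cone neighbourhood basis at $u\in\partial D_{y_i}$ agrees with the inverse-limit topology; it does, provided the basis also includes the points of $D_{y_i}$ near $u$.
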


\begin{rem}
    For actions on $S^1$, this is related to the classical Denjoy blow-up (see for example \cite[Section 4.4.3]{HasselblattKatok2003}), which only requires the initial action $\rho$ to be by homeomorphisms. For abelian groups, the Denjoy construction can even be smoothed to a $C^1$-action. We do not expect this to be true in general. It is also remarked in \cite{BowdenMann2022} that this Proposition should hold for other manifolds $M$ (other than just the $k$-spheres), but this would require some care.
\end{rem}

\noindent We now restrict our discussion to the group $\bbG =\SL_{n+1}$, for ease of exposition. Recall that for $n \geq 1$ the group $\SL_{n+1}(\bbR)$ admits an analytic action $\sigma:\SL_{n+1}(\bbR) \to \Diff^\omega_+(S^n)$ via $\sigma(g) v := \frac{gv}{\|gv\|}$, henceforth referred to as the \emph{projective action} of $\SL_{n+1}(\bbR)$. This action is transitive and continuous, so any dense subgroup of $\SL_{n+1}(\bbR)$ admits a dense orbit on $S^n$. There is also the \emph{dual projective action} given by $\sigma^\vee (g) :=\sigma((g^{-1})^T)$.

\begin{cor}
\label{cor:discontinuous_action}
    For every $n \geq 1$ there exists a discontinuous homomorphism $\rho':\SL_{n+1}(\bbQ)\to \Homeo_+(S^{n})$.
\end{cor}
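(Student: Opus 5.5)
We apply Proposition~\ref{prop:Bowden_Mann} with $k=n$ and $G=\SL_{n+1}(\bbQ)$, which is countable. For $\rho$ we take the restriction to $\SL_{n+1}(\bbQ)$ of the projective action $\sigma$. Since $\SL_{n+1}(\bbQ)$ is dense in $\SL_{n+1}(\bbR)$ and $\sigma$ is continuous and transitive, every orbit of $\rho$ is dense. Fix a point $y_0\in S^n$ and let $Y=\rho(\SL_{n+1}(\bbQ))y_0$. The proposition then gives $\rho':\SL_{n+1}(\bbQ)\to\Homeo_+(S^n)$ and a semi-conjugacy $\tau$ with $\tau\circ\rho'(g)=\rho(g)\circ\tau$. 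Each fibre $D_y=\tau^{-1}(y)$ for $y\in Y$ is a closed disc, and $\tau$ is injective off $\tau^{-1}(Y)$. It remains to show that $\rho'$ is discontinuous for the topology induced from $\SL_{n+1}(\bbR)$.

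Suppose $\rho'$ is continuous. We use the stabiliser $\Gamma_0=\mathrm{Stab}(y_0)\cap\SL_{n+1}(\bbQ)$, the $\bbQ$-points of a parabolic subgroup $P$. The equivariance of $\tau$ gives $\rho'(\Gamma_0)D_{y_0}=D_{y_0}$. Next pick a sequence $g_m\in\SL_{n+1}(\bbQ)$ with $g_m\to e$ in $\SL_{n+1}(\bbR)$ such that $y_m:=\rho(g_m)y_0\in Y\setminus\{y_0\}$; this is possible because the orbit is dense and the action is continuous, e.g.\ take $g_m=\exp(X/m)$-type unipotent rational elements moving $y_0$. Continuity of $\rho'$ forces $\rho'(g_m)\to\mathrm{id}$ uniformly. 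Hence $D_{y_m}=\rho'(g_m)D_{y_0}$ converges in the Hausdorff topology to $D_{y_0}$.

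The disc fibres $D_y$ for distinct $y\in Y$ are pairwise disjoint, and they are non-degenerate by construction. The contradiction is that $\tau(D_{y_m})=y_m$ while $\tau(D_{y_0})=y_0$. Since $\tau$ is continuous, a point $x\in D_{y_0}$ in the interior is approximated by points $x_m\in D_{y_m}$, which only forces $y_m\to y_0$, and that is consistent. So we need a sharper argument. The cleanest version uses the sequence $g_m$ together with $\rho'(g_m)\to\mathrm{id}$ uniformly. Then $\rho'(g_m)$ maps the interior point $x$ close to $x$, i.e.\ into a small neighbourhood of $x$ contained in the interior of $D_{y_0}$. But $\rho'(g_m)x\in D_{y_m}$, which is disjoint from $D_{y_0}$. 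This is a contradiction once $D_{y_0}$ has nonempty interior.

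The main obstacle is therefore ensuring that the fibres $D_y$ are genuine discs with nonempty interior (dimension $n$), not points. This should be part of the Bowden--Mann construction: "closed topological disc" in Proposition~\ref{prop:Bowden_Mann} means an $n$-dimensional disc, whose interior is open in $S^n$ by invariance of domain. With that, the argument above shows that $\rho'(g_m)\not\to\mathrm{id}$, so $\rho'$ is discontinuous.
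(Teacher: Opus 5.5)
Your argument is correct and is essentially the paper's proof. You apply Proposition~\ref{prop:Bowden_Mann} to the projective action restricted to $\SL_{n+1}(\bbQ)$ and pick rational $g_m\to e$ moving the base point; an interior point of the $n$-dimensional disc $D_{y_0}$ is then sent by $\rho'(g_m)$ into the disjoint fibre $D_{g_m y_0}$, so it can never re-enter the open set $\operatorname{int}(D_{y_0})$. Two points of presentation: the detours through $\Gamma_0$ and Hausdorff convergence are unnecessary, and the existence of $g_m$ is better justified concretely, e.g.\ by $\Id+\frac{1}{m}e_ie_j^T$ with $i\neq j$ and the $j$-th coordinate of $y_0$ nonzero, rather than by density of the orbit.
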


\begin{proof}
    Let $G = \SL_{n+1}(\bbQ)$. Apply the Bowden-Mann construction (Proposition~\ref{prop:Bowden_Mann}) to the projective action $\sigma|_{G}$ and any dense orbit $Y = \sigma(G)\cdot x$ to obtain the action $\rho':G \to \Homeo_+(S^n)$ and the semi-conjugacy $\tau:S^n\to S^n$. It is shown in their proof that in fact the interiors of the $D_y$ are pairwise disjoint. We denote the interior of a set $A$ by $\operatorname{int}(A)$.
    
    To show that this $\rho$' is discontinuous, notice that the stabiliser of $x$ (denoted $G_x$) is closed in the topology induced from $\SL_{n+1}(\bbR)$, so we can find a sequence $g_j \in G\backslash G_x$ such that $g_j \to \Id$. Take any point $z \in \operatorname{int}(D_x)$, then $\rho'(g_j)z \in \operatorname{int}(D_{g_jx})$ and since these are all disjoint, $\rho'(g_j)z \not\to z$.
\end{proof}

\noindent The use of blow-ups to obtain exotic actions of higher rank semisimple groups and their lattices goes back to Katok and Lewis \cite{KatokLewis1996}, and was extended by Benveniste and Fisher \cite{Benveniste1996, BenvenisteFisher2005}. See also \cite{Fisher2019GroupsActingOnManifoldsZimmer} and the references therein for more. 

\begin{rem}
\label{rem:p_Adic_lattice}
    Let $G$ be any semisimple Lie group that acts transitively on the sphere (these have been classified by Onishchik \cite{Onishchik1967}), then the same argument works for any dense countable subgroup $\Gamma \leq G$. In particular, let $\Gamma \leq \SU_{n+1}(\bbR) \times \SL_{n+1}(\bbQ_p)$ be a lattice that projects densely onto the $\SU_n(\bbR)$-factor, then it must admit actions by homeomorphisms on $S^{2n+1}$ that do not extend to a continuous action of $\SU_{n+1}(\bbR)$. Any lattice in $\SL_{n+1}(\bbQ_p)$ for $n\geq 2$ is commensurable to one of this form (by Margulis' Arithmeticity Theorem). Compare this to \cite[Question 2]{cohen2026actionslatticespadicsarithmetic}.
\end{rem}

\noindent To obtain the uncountably many non-conjugate actions (in fact, non semi-conjugate actions) in Theorem~\ref{theo:uncountably_many}, we adapt the Bowden-Mann construction to allow for more flexibility. This is done in Proposition~\ref{prop:Bowden_mann_specified}. However, the actions in Theorem~\ref{theo:uncountably_many} are all still (left) semi-conjugate to the projective one. This motivates the following.

\begin{que}
    Does there exist a non-trivial homomorphism $\rho:\SL_{n+1}(\bbQ) \to \Homeo_+(S^n)$ that is not (left) semi-conjugate to the projective action $\sigma$ or the dual projective action $\sigma^\vee$?
\end{que}

\noindent This question is open for all of the groups mentioned in Remark~\ref{rem:p_Adic_lattice}, including lattices in $p$-adic groups. It seems plausible that a negative solution to the Hilbert-Smith conjecture could be used to construct such examples. We note that on 06 October 2026, which was after this paper was written and circulated, OpenAI claimed a proof of the Hilbert-Smith conjecture \cite{OpenAI2026HilbertSmith}. At the time of writing this result has not (to the authors' knowledge) been verified by a human or Lean-formalised.

\subsection*{Outline of the paper} In Section~\ref{sect:blow_up}, we extend the Bowden-Mann construction by specifying the action of $G_x$ on the disc $D_x$, see Proposition~\ref{prop:Bowden_mann_specified}. As a warm-up, in Section~\ref{sect:Sl_2_on_S2} we use this construction to give a discontinuous action of $\SL_2(\bbQ)$ on $S^2$, starting from any discontinuous action of $\bbQ$ on the 2-dimensional disc, these are plentiful. In section~\ref{sect:higher_rank} we use this construction to prove Theorem~\ref{theo:uncountably_many}.

In Section~\ref{sect:circle_Actions} we explicate another way to obtain a discontinuous $\SL_2(\bbQ)$-action on $S^1$, recalling some facts about groups acting on circles. Finally, in Section~\ref{sect:final_observations} we collect some additional observations on this question.

\subsection*{Acknowledgements} I would like to primarily thank Andreas Wieser whose interest in this question and our conversations led to this work. I would like to thank Marc Burger for suggesting these questions, and Menny Akka and Manfred Einsiedler for helpful comments and discussions.

\subsection*{Disclosure on AI use}

GPT 5.6 Sol was used for literature search and proof ideation, specifically for Theorem~\ref{theo:uncountably_many}. GPT 6 Astra was used to proof-read. The text in this note is entirely human-written, and the author takes sole responsibility for any inaccuracies.

\section{Blow-ups with specified interior maps}
\label{sect:blow_up}

\noindent The following is the key proposition, which we will apply in a few different ways later. Bowden–Mann prove their blow-up construction in the case that the stabiliser of any point in the dense orbit is trivial, and remark that their construction extends to nontrivial stabilisers using the natural action of the stabiliser on the tangent space. We require a slightly stronger version, allowing us to prescribe the action of the stabiliser on the disc $D_x$ explicitly, subject to a compatibility requirement. 

Before we state it we fix some notation. For a real vector space $V$, we let $\bbP^+(V):= (V\backslash \{0\})/\bbR_{>0}$ denote the space of directed rays in $V$ based at 0, we denote for a vector $v\in V$ the corresponding ray by $[v] \in \bbP^+(V)$. For two real vector spaces $V,W$ and a linear isomorphism $T:V\to W$ we denote by $[T]:\bbP^+(V) \to \bbP^+(W)$ the corresponding projectivisation. $\bbD^k$ denotes the open unit disc in $\bbR^k$, and $\bbB^k$ its closure. We consistently identify $\bbP^+(V) \cong\partial \bbB^k$ when this causes no confusion.

\begin{prop}
\label{prop:Bowden_mann_specified}
    Let $G$ be a countable group and let $\rho:G \to \Diff^1_+(S^k)$ be a homomorphism with a dense orbit $Y:= \rho(G)\cdot x$ for some $x\in S^k$. Let $H = G_x$ be the stabiliser of $x$ in $G$. Let $\alpha:H\to \Homeo_+(\bbB^k)$ be a homomorphism that satisfies 
    \begin{equation*}
        \alpha(h)[v] = [D_x\rho(h)v] \text{ for all } h\in H \text{ and } [v] \in \bbP^+(T_xS^k),
    \end{equation*}
    where we identified $\partial \bbB^k \cong \bbP^+(T_xS^k)$.

    Then there exist some $\hat{\rho}_\alpha:G \to \Homeo_+(S^k)$
    and a continuous semi-conjugacy $\tau:S^k \to S^k$ between $\hat{\rho}_\alpha$ and $\rho$ such that
    \begin{enumerate}[label = {(\roman*)}]
        \item For every $y\in Y$ the fiber $D_y :=\tau^{-1}(y)$ is a closed topological disc,
        \item $\tau$ is one-to-one on $S^k\backslash\tau^{-1} (Y)$, and
        \item After identifying $D_x$ with $\bbB^k$ one has 
        \begin{equation*}
            \hat{\rho}_\alpha(h)|_{D_x} = \alpha(h)
        \end{equation*}
        for all $h \in H$.
    \end{enumerate}
\end{prop}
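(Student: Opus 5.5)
We follow the Bowden–Mann blow-up (Proposition~\ref{prop:Bowden_Mann}), changing only how the blown-up discs are identified with one another. The resulting action restricts to $\alpha$ on the fibre over $x$ rather than to the projectivised derivative.

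\textbf{Coset representatives and fibre maps.} Enumerate the orbit $Y=\{y_0=x,y_1,y_2,\dots\}$. For each $y_i$ choose $g_i\in G$ with $\rho(g_i)x=y_i$, taking $g_0=\Id$; these are coset representatives for $G/H$. For $g\in G$ and $y_i\in Y$, write $g g_i = g_j h$ with $h\in H$ uniquely determined, where $y_j=\rho(g)y_i$. Define the fibre map $D_{y_i}\to D_{y_j}$ by
\begin{equation*}
\Phi(g,y_i) := \beta_j\circ\alpha(h)\circ\beta_i^{-1},
\end{equation*}
where $\beta_i:\bbB^k\to D_{y_i}$ are fixed identifications with $\beta_0=\Id$. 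Uniqueness of the factorisation makes these cocycle-compatible: $\Phi(g'g,y)=\Phi(g',\rho(g)y)\circ\Phi(g,y)$. On the boundary, the hypothesis on $\alpha$ and the chain rule give
\begin{equation*}
\Phi(g,y_i)|_{\partial} = [D_{y_i}\rho(g)]
\end{equation*}
under the identifications $\partial D_{y_i}\cong\bbP^+(T_{y_i}S^k)$ via $[D_x\rho(g_i)]$. So the boundary map is the projectivised derivative, exactly as in Bowden–Mann.

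\textbf{Topological construction.} Choose radii $r_i>0$ with $\sum_i r_i^k<\infty$, chosen small enough as in \cite{BowdenMann2022}. The set $S^k$ is built by the Bowden–Mann procedure: blow up each $y_i$ into a sphere of directions $\bbP^+(T_{y_i}S^k)$, glue in a disc of size comparable to $r_i$, and obtain $\tau$ collapsing each disc $D_{y_i}$ to $y_i$. Properties (i) and (ii) then hold exactly as in their proof, since they depend only on the space and on $\tau$, not on the interior maps. Define $\hat\rho_\alpha(g)$ to be the lift of $\rho(g)$ off $\tau^{-1}(Y)$ and to be $\Phi(g,y)$ on $D_y$. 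The cocycle identity makes $\hat\rho_\alpha$ a homomorphism, and property (iii) holds by construction because $g_0=\Id$ and $\beta_0=\Id$.

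\textbf{Main obstacle: continuity of $\hat\rho_\alpha(g)$.} This is where the $C^1$ hypothesis is used. In Bowden–Mann, continuity at points of $\partial D_y$ and at accumulation points of small discs uses two facts. First, the interior maps extend the projectivised derivative. Second, the discs $D_{y_i}$ shrink, with $\operatorname{diam} D_{y_i}\to0$ in the topology of the blown-up space.

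Here the interior maps $\alpha(h)$ are arbitrary homeomorphisms of $\bbB^k$. However, a sequence of points in distinct discs $D_{y_{i_m}}$ converging to a limit point $p$ has images in discs $D_{\rho(g)y_{i_m}}$, and these also have diameters tending to zero. Their images therefore converge to $\hat\rho_\alpha(g)p$ regardless of what happens inside the discs, because the location of a small disc is controlled by $\rho(g)$ and the first-order data $D\rho(g)$.

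Continuity at $\partial D_y$ from inside follows because $\Phi(g,y)$ is a homeomorphism of the closed disc. Continuity from outside reduces to the boundary agreement established above together with the Bowden–Mann estimate. So the step to check carefully is that their estimate only uses the boundary values of the disc maps together with smallness of the discs. If instead it used a specific radial extension, we would first conjugate: compose $\alpha(h)$ with a radial collar, so that near the boundary the map agrees with the radial extension of the projectivised derivative. This is possible because $\alpha(h)$ restricted to $\partial\bbB^k$ equals that derivative.

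Applying the same reasoning to $\hat\rho_\alpha(g)^{-1}=\hat\rho_\alpha(g^{-1})$ shows each $\hat\rho_\alpha(g)$ is a homeomorphism. The maps are orientation preserving because $\rho(g)$ and $\alpha(h)$ are.
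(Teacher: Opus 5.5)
Your proposal is correct and is essentially the paper's proof. The factorisation $gg_i=g_jh$ is the paper's cocycle $c(g,y)=s_{gy}^{-1}gs_y$, your $\beta_i$ (with boundary values $[D_x\rho(g_i)]$) play the role of $T_y=\rho'(s_y)\circ T_x$, and continuity is the same shrinking-diameter argument. The collar modification you hedge about is not needed: your map agrees with the Bowden--Mann homeomorphism $\rho'(g)$ on the closed complement of the open discs, so choosing $p_m\in\partial D_{y_{i_m}}$ gives $\hat{\rho}_\alpha(g)p_m=\rho'(g)p_m\to\rho'(g)p=\hat{\rho}_\alpha(g)p$, and the shrinking of the image discs $D_{\rho(g)y_{i_m}}$ finishes the argument.
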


\begin{proof}
    We complete the proof in multiple steps. Where convenient we will drop the $\rho$ and denote the diffeomorphism $\rho(g)$ simply by $g$.
   
    \noindent\textbf{Step 1:} \emph{The Bowden-Mann construction.}
    
    \noindent By Proposition~\ref{prop:Bowden_Mann} there is a manifold $M$ homeomorphic to $S^k$, a homomorphism $\rho':G \to \Homeo_+(M)$, and a surjective, continuous map $\tau:M \to S^k$ such for every $y\in Y$ the preimage $D_y:=\tau^{-1}(y)$ is a closed disc, it is one-to-one on $M\backslash \tau^{-1}(Y)$, and such that $\tau \circ \rho'(g) =\rho(g)\circ \tau$  for every $g \in G$. That is, we just need to prove property (iii). We choose to denote the blown-up $S^k$ by $M$ for the sake of readability, despite it being homeomorphic to $S^k$.

    For any compatible metric on $M$, from the proof of \cite[Proposition 4.1]{BowdenMann2022} it follows that for any sequence of pairwise distinct elements $y_n \in Y$ we have that $\operatorname{diam}(D_{y_n}) \to 0 $.

    Letting $K := M \backslash \bigcup_{y\in Y}\operatorname{int}(D_y)$, the restriction of $\rho'(g)$ to $K$ sends $\partial D_y \to \partial D_{g y}$. Under the canonical identification $\partial D_y \cong \bbP^+(T_yS^k)$, the boundary map is given by
    \begin{equation*}
        \rho'(g)|_{\partial D_y}: D_y \mapsto D_{gy}:[v] \mapsto [D_y\rho(g)v].
    \end{equation*}
    
    \noindent\textbf{Step 2:} \emph{the stabiliser-valued cocycle.} 
    
    \noindent For each $y \in Y$ choose a representative $s_y \in G$ such that $s_y x = y$, we choose for $x$ the representative $s_x = e$.  Define for $g \in G$ and $y\in Y$ the element $c(g,y) := s_{g y}^{-1}g s_y$. It is easy to check that this is a cocycle (that is, $c(g_1g_2,y) = c(g_1,g_2y)c(g_2,y)$), and since $c(g,y)x = s_{g y}^{-1}g y = x$ in fact $c(g,y) \in H$ for all $g\in G$ and $y\in Y$.
    
    \noindent\textbf{Step 3:} \emph{Defining $\hat{\rho}_\alpha$.}
    
    \noindent Choose an orientation preserving homeomorphism $T_x:\bbB^k \to D_x$ that agrees on the boundary with the identification $\partial\bbB^k  \cong \partial D_x \cong \bbP^+(T_xS^k)$, this is always possible by the Alexander trick. For every $y\in Y$ set 
    \begin{equation*}
        T_y:= \rho'(s_y)\circ T_x:\bbB^k \to D_y.
    \end{equation*}
   
     Consider some $g\in G$. On $K$, set $\hat{\rho}_\alpha(g)$ to agree with $\rho'(g)$. On $D_y$, set
    \begin{equation*}
        \hat{\rho}_\alpha(g):= T_{g y}\alpha(c(g,y))T_y^{-1}.
    \end{equation*}
    
    By the chain rule and the fact that $s_{g y}c(g,y)s_y^{-1} = g$ it follows that the two definitions agree on $\partial D_y$ and so $ \hat{\rho}_\alpha(g)$ is well-defined.
    
    \noindent\textbf{Step 4:} \emph{$\hat{\rho}_\alpha$ is a group homomorphism.}
    
     \noindent The group law holds on $K$ so we just need to verify it on the $D_y$. Indeed, on $D_y$, 
    \begin{align*}
        \left(\hat{\rho}_\alpha(g_1)\hat{\rho}_\alpha(g_2)\right)|_{D_y} & = \hat{\rho}_\alpha(g_1)|_{D_{g_2y}}\cdot \hat{\rho}_\alpha(g_2)|_{D_y} \\
        & = T_{g_1g_2 y}\alpha(c(g_1,g_2y))T_{g_2y}^{-1}T_{g_2 y}\alpha(c(g_2,y))T_y^{-1} \\
        & = T_{g_1g_2 y}\alpha(c(g_1,g_2y)c(g_2,y))T_y^{-1} \\
        & = T_{g_1g_2 y}\alpha(c(g_1g_2,y))T_y^{-1} \\
        & = \hat{\rho}_\alpha(g_1g_2)|_{D_y}
    \end{align*}
    as required. Since $c(h,x) = h$ for all $h \in H$, this gives the required property (iii).
    
    \noindent\textbf{Step 5:} \emph{For every $g\in G$, $\hat{\rho}_\alpha(g)$ is a homeomorphism.}
    
     \noindent The only difficult case is to consider a sequence $z_n \in \operatorname{int}(D_{y_n})$ where $z_n \to z$ and the $y_n$ are pairwise distinct, we want to show that $\hat{\rho}_\alpha(g)z_n \to \hat{\rho}_\alpha(g)z$. 
     
     Recall that necessarily in this case $\operatorname{diam}(D_{y_n})\to 0$, so choose some $p_n \in \partial D_{y_n}$, and then $d(z_n,p_n) \leq \operatorname{diam}(D_{y_n}) \to 0$. So in particular we also have that $p_n \to z$, and so $z\in K$. Since the new action equals $\rho'$ on the boundary, we have that $\hat{\rho}_\alpha(g)p_n = \rho'(g)p_n \to \rho'(g)z = \hat{\rho}_\alpha(g)z$.
     
     The points $gy_n$ are also pairwise distinct, and so we again have that $\operatorname{diam}(D_{g y_n})\to 0$. Since $\hat{\rho}_\alpha(g)z_n$ and $\hat{\rho}_\alpha(g)p_n$ lie in $D_{g y_n}$ we have that $d(\hat{\rho}_\alpha(g)z_n,\hat{\rho}_\alpha(g)p_n) \to 0$, and so $\hat{\rho}_\alpha(g)z_n \to \hat{\rho}_\alpha(g)z$ as required.

    By repeating the argument for $g^{-1}$ we see that in fact $\hat{\rho}_\alpha(g)^{-1} = \hat{\rho}_\alpha(g^{-1}) $ is also continuous, and so $\hat{\rho}_\alpha(g)$ is a homeomorphism. Its restriction between disc interiors is orientation preserving and hence since the sphere is connected, $\hat{\rho}_\alpha(g)$ is orientation preserving. Finally, fixing an orientation-preserving homeomorphism $M \cong S^k$ gives the result. 
\end{proof}

\section{A discontinuous action of \texorpdfstring{$\SL_2(\bbQ)$ on $S^2$}{}}
\label{sect:Sl_2_on_S2}

\noindent We apply Proposition~\ref{prop:Bowden_mann_specified} to obtain many discontinuous actions of $\SL_2(\bbQ)$ on the $2$-sphere $S^2$. This is not necessary for Theorem~\ref{theo:uncountably_many} and may be skipped, but we find it illustrative.

To use Proposition~\ref{prop:Bowden_mann_specified}, we need a $C^1$-action of $\SL_2(\bbQ)$ on $S^2$ with a dense orbit, and a prescribed action of the stabiliser of a point on the disc; we start with the latter. Let $B = \{x\in \bbR^2:\|x\|<1/4\} \subset \bbB^2$. By \cite[Proposition 1.3]{GrunderSerraille2026} we have a homomorphism
\begin{equation*}
    \Phi:\bbQ \to \Diff^\infty_c(B)
\end{equation*}
that is discontinuous in the $C^0$-topology, and in fact many such examples exist. Since for every $t\in \bbQ$ the map $\Phi(t)$ has compact support in $B$, it is the identity on some neighbourhood of $\partial B$. So we can extend it by the identity to get a smooth diffeomorphism of $\bbB^2$, denoting this extension also by $\Phi:\bbQ\to \Homeo_+(\bbB^2)$.

Morally speaking, to obtain an action of $\SL_2(\bbQ)$ on $S^2$ we would like to consider the action of $\SL_2(\bbQ) $ on $\bbR^2$ and take the one-point compactification. However, this results in an action that is not $C^1$. To get around this, we consider the embedding 
\begin{equation*}
    \imath:\SL_2(\bbR) \to \SL_3(\bbR) : g \mapsto \begin{pmatrix}
        g & 0 \\ 0 & 1
    \end{pmatrix}.
\end{equation*}
and consider the projective action $\sigma:\SL_3(\bbR) \to \Diff^\omega_+(S^2)$. Consider the point $x = \frac{1}{\sqrt{2}}(1,0,1) = \frac{1}{\sqrt{2}}(e_1,1)$ where $e_1 = (1,0)$. Then for any $g \in \SL_2(\bbQ)$, 
\begin{equation*}
    \sigma(\imath(g))x = \frac{(ge_1,1)}{\sqrt{\|ge_1\|^2+1}}
\end{equation*}
Since $G\cdot e_1 = \bbQ^2 \backslash \{0\}$ and the fact that $v \mapsto \frac{(v,1)}{\sqrt{\|v\|^2+1}}$ is a homeomorphism from $\bbR^2$ to the open upper hemisphere, we see that $G \cdot x$ is dense in the closed upper hemisphere. To obtain density in the closed lower hemisphere we take an extension of $\SL_2(\bbQ)$ by the group with two elements, as follows.

Let $R = \diag(1,-1,-1) \in \SO(3)$, and consider
$$G' = \{\imath(g)R^\varepsilon : g\in \SL_2(\bbQ),\varepsilon \in \{0,1\}\} \leq \SL_3(\bbQ),$$
this is isomorphic as a group to $G \rtimes_\theta \bbZ/2\bbZ$ where $\theta(g) = JgJ^{-1}$ and $J = \diag(1,-1)$. As before, $GR \cdot x$ is dense in the closed lower hemisphere, and so $G'\cdot x$ is dense in $S^2$. Denote the projective action $\sigma$ restricted to $G'$ by $\rho$, so we have an action by $\rho:G'\to \Diff^\omega_+(S^2)$. 

The stabiliser at $x$ is $G'_x = \{\imath(u(t)):t\in \bbQ\} \cong \bbQ$, where $u(t) = (\begin{smallmatrix}
    1 & t \\ 0 & 1
\end{smallmatrix})$. In fact for any $t\in \bbR$ the matrix $\imath(u(t))$ fixes $x$, and so gives some map
\begin{equation*}
    \delta_t:= \bbP^+(D_x\rho(u(t)))\in \Homeo(\bbP^+(T_xS^2)).
\end{equation*}
The $(\delta_t)_{t\in \bbR}$ give a flow on $\bbP^+(T_xS^2)$, which we can identify concretely. A basis of $T_xS^2$ is given by $f_1 = (1/2,0,-1/2)$, $f_2 = (0,1,0)$, and a direct calculation shows that
$$D_x\rho(u(t))(af_1+bf_2) = (a+tb)f_1+bf_2.$$
That is, with respect to this basis $\delta_t$ is the unipotent flow. We finally need to modify $\Phi$ so that it matches this on $\partial \bbB^2$, which we identify with $\bbP^+(T_xS^2)$.

Choose a continuous function $\psi:[0,1]\to [0,1]$ such that $\psi(r) = 0$ for $r\leq 1/3$ and $\psi(r) = 1$ for $r \geq 2/3$. For any $t\in \bbR$ define a map $C_t \in \Homeo(\bbB^2)$ via 
$$C_t(0) = 0, \quad C_t(rv) = r \delta_{\psi(r)t}v \ \text{for } r >0, \ v \in \partial \bbB^2.$$
In particular, $C_t|_B = \Id_B$. It follows that 
\begin{equation*}
    C_s\Phi(t) = \Phi(t)C_s
\end{equation*}
for all $s\in \bbR$ and $t\in \bbQ$. Let $\alpha(u(t)) := C_t\Phi(t)$, this gives a homomorphism $\alpha:G'_x \to \Homeo(\bbB^2)$ such that 
\begin{enumerate}
    \item $\alpha(\imath(u(t)))|_{\partial \bbB^2} = \delta_t$, and
    \item $\alpha(\imath(u(t)))|_{B} = \Phi(t)$.
\end{enumerate}
We are thus in a position to apply Proposition~\ref{prop:Bowden_mann_specified}; the resulting action $\hat{\rho}_\alpha:G' \to \Homeo_+(S^2)$ is discontinuous, since $\Phi$ was, completing the construction.

\section{Discontinuous actions of \texorpdfstring{$\SL_{n+1}(\bbQ)$ on $S^n$}{}}
\label{sect:higher_rank}

\noindent In this section we prove Theorem~\ref{theo:uncountably_many} by using Proposition~\ref{prop:Bowden_mann_specified}. Let $G = \SL_{n+1}(\bbQ)$ and recall the projective action $\sigma:G\to \Diff^\omega_+(S^n)$. Let $x = e_1 = (1,0,\ldots,0)^T$, clearly its orbit is dense, and its stabiliser is
\begin{equation*}
    H = G_x = \left\{h = \begin{pmatrix}
        a & v \\ 0 & B
    \end{pmatrix}: a \in \bbQ_{>0},\ B \in \GL_n(\bbQ),\ a\det B = 1,\ v \in \bbQ^n\right\}.
\end{equation*}
We can identify $T_xS^n $ with $e_1^\perp \cong \bbR^n$, then by differentiation we see that for $h \in H$, 
\begin{equation*}
    D_x\sigma(h) = a^{-1}B.
\end{equation*}
Indeed, let $N(y) := \frac{y}{\|y\|}$, then $\sigma(h) = N \circ h$ and (viewing $S^n$ as an embedded submanifold of $\bbR^{n+1}$), $D_{x}\sigma(h) = D_{ax}N\circ h$. Then notice that for $y\in \bbR^{n+1}$, $D_yN(w) = \frac{1}{\|y\|}\left(1-\frac{yy^T}{\|y\|^2}\right)w$. So for $v \in e_1^\perp$,
\begin{equation*}
    D_x\sigma(h)v = a^{-1}(1-e_1e_1^T)h v = a^{-1}Bv
\end{equation*}
as claimed.

To use Proposition~\ref{prop:Bowden_mann_specified}, we need actions of $H$ on $\bbB^n$ matching the boundary conditions. Let $\psi:\bbQ_{>0} \to \bbR_{>0}$ be any homomorphism, we can compose this with the projection $H \to \bbQ_{>0}:h \mapsto a$ to obtain a homomorphism $\chi:H \to \bbR_{>0}$. Now set for $h\in H$,
\begin{equation*}
    A_\psi(h):= \chi(h)D_x\sigma(h) = \psi(a)a^{-1}B \in \GL_n^+(\bbR)
\end{equation*}
(where the plus denotes the connected component of the identity in $\GL_n(\bbR)$, the matrices with a positive determinant). For any $A \in \GL_n(\bbR)$ we can consider the diffeomorphism $\Theta_A \in \Diff^\infty(\bbB^n)$ defined by
\begin{equation*}
    \Theta_A(x) := \frac{Ax}{\sqrt{1-\|x\|^2+\|Ax\|^2}},
\end{equation*}
this gives a homomorphism $\Theta:\GL_n(\bbR) \to \Diff^\infty(\bbB^n)$. Finally, set $\alpha(h) = \alpha_\psi(h) := \Theta_{A_\psi(h)}$, giving a homomorphism $\alpha:H \to \Diff^\infty_+(\bbB^n)$. For $v\in \partial \bbB^n$ notice that $\alpha(h)[v] = [D_x\sigma(h)v]$, as is required to apply Proposition~\ref{prop:Bowden_mann_specified} to $\sigma, x$, and $\alpha$, to obtain the action $\hat{\rho}_\alpha:G \to \Homeo_+(S^n)$. It is easy to see that this action is discontinuous when $\psi$ is, without using the argument of Corollary~\ref{cor:discontinuous_action}. We write the full details for the readers' benefit.

Indeed, any homomorphism $\psi :\bbQ_{>0}\to \bbR_{>0}$ is of the form $\psi(r) = \prod c_p^{\nu_p(r)}$ where the product is taken over all primes $p$, $c_p = \psi(p) >0$ are real numbers, and $\nu_p:\bbQ^\times\to \bbZ$ denotes the $p$-adic valuation. Let $\lambda_p = \ln c_p/\ln p$. 

Suppose now that $\psi$ is discontinuous, equivalently there is no $c \in \bbR$ such that $\psi(r) = r^c$ for all $r\in \bbQ_{>0}$. In particular we can choose distinct primes $p\neq q$ such that $\lambda_p \neq \lambda_q$. Let $\theta = \ln(q)/\ln(p)$ and choose continued fraction convergents $a_k/ b_k\to \theta$, in particular $|a_k/b_k-\theta|<1/b_k^2$. Let $s_k = p^{a_k}/q^{b_k}$, and notice that $\ln s_k \to 0$ and hence $s_k \to 1$ as $k \to \infty$. However, 
\begin{align*}
    \ln \psi(s_k) & = a_k \ln \psi(p) - b_k \ln \psi(q) \\
    & = a_k \lambda_p \ln p - b_k \lambda_q\ln q\\
    & = \lambda_p(a_k\ln p-b_k \ln q) + (\lambda_p-\lambda_q)b_k \ln q
\end{align*}
and so $|\ln \psi(s_k)| \to \infty$ as $k \to \infty$. Now set
$$h_k = \begin{pmatrix}
    s_k^n & 0 \\ 0 & s_k^{-1}\Id_n
\end{pmatrix},$$ 
notice that $h_k \to \Id$ in $\SL_{n+1}(\bbR)$. Write $A_\psi(h_k) = \psi(s_k)^{n}s_k^{-n-1}\Id =: C_k\Id$, and notice that $C_k \to 0$ or $\infty$ as $k \to \infty$, and for any unit vector $v \in \bbR^n$
\begin{equation*}
    \left\|\Theta_{A_{h_k}}\left(\frac{v}{\sqrt{2}}\right)-\frac{v}{\sqrt{2}}\right\| = \left|\frac{C_k}{\sqrt{1+C_k^2}}-\frac{1}{\sqrt{2}}\right| > \frac{1}{4}
\end{equation*}
eventually, whether $C_k \to 0$ or $\infty$. So this action is indeed discontinuous.

All that is left to do is to determine if for  discontinuous homomorphisms $\psi_1 \neq \psi_2$ from $\bbQ_{>0} $ to $\bbR_{>0}$, the corresponding actions $\rho_1:= \hat{\rho}_{\alpha_{\psi_1}}$ and $\rho_2:= \hat{\rho}_{\alpha_{\psi_2}}$ are conjugate. Recall that each comes with a semi-conjugacy $\tau_i$ to the standard projective action $\sigma$.

Suppose first that $\beta$ is any other semi-conjugacy between $\rho_1$ and $\sigma$. Notice that 
\begin{equation*}
    U = \left\{\begin{pmatrix}1 & v\\0 & \Id\end{pmatrix}:v\in \bbQ^{n}\right\}
\end{equation*}
acts trivially by $\rho_1$ on $D_x$, and so $\beta(D_x)$ lies in the fix point set of $\sigma(U)$, which is just $\{\pm x\}$. Since $D_x$ is connected and $\beta$ is continuous, it is constant on $D_x$. The same argument holds for every $y \in Y$, since this is a dense set this completely determines $\beta$ up to the antipodal map $\pm \Id$, and so $\beta = \pm \tau_1$.

Now suppose that $\gamma$ is a semi-conjugacy from $\rho_1$ to $\rho_2$. In particular, $\tau_2 \circ \gamma$ is a semi-conjugacy from $\rho_1$ to $\sigma$, and so by the above $\tau_1 = \pm \tau_2 \circ \gamma$. Suppose for the sake of argument that $\tau_1 = \tau_2 \circ \gamma$, the other case is similar. In particular, $\gamma$ maps $D_x^1 := \tau_1^{-1}(x)$ onto $D_{x}^2 := \tau_2^{-1}(x)$ in an equivariant way, and thus we must have that
\begin{equation*}
    \ker(a \mapsto \psi_1(a)^na^{-n-1}) \subset  \ker(a \mapsto \psi_2(a)^na^{-n-1}),
\end{equation*}
and we can clearly pick continuum-many characters that pairwise do not satisfy this; thus the proof of Theorem~\ref{theo:uncountably_many} is complete. \qed

\section{Circular orders and actions on circles}
\label{sect:circle_Actions}

\noindent In this section, we recall some facts about group actions on circles, and in particular give another construction of a discontinuous action of $\SL_2(\bbQ)$ on the circle. First, recall the following definition.

\begin{defn}
    A set $S$ with at least 4 elements is \emph{circularly ordered} if for every $p\in S$ there is a choice of total ordering $<_p$ on $S\backslash \{p\}$, such that for every quartet $\{p,q,x,y\}\in S$ we have that $<_p$ and $<_q$ are the same on $\{x,y\}$, unless $x<_pq<_py$ in which case $y<_qp<_qx$.
\end{defn}
Given a circular order on $S$ we can define the function $c:S^3 \to \{-1,0,+1\}$ by setting $c(x,y,z) = 1$ if $y <_x z$, $-1$ if the triple is negatively ordered, and 0 if the points are not pairwise distinct. We define the \emph{order interval} to be
\begin{equation*}
    (x,z)_c := \{y\in S:c(x,y,z)=1\}
\end{equation*}
A non-empty subset of $S$ is said to be \emph{convex} if it is the intersection of a set of order intervals. We say that a circular order is \emph{dense} if $(x,z)_c \neq \emptyset$ whenever $x\neq z$. The following is a simple verification.

\begin{lem}
\label{lem:circular_and_linear}
    Let $S$ be a circularly ordered set, and for every $x\in S$ let $L_x$ be a linearly ordered set. Then $T = \bigsqcup_{x\in S} L_x$ has a circular order such that:
    \begin{enumerate}
        \item Each $L_x$ is convex;
        \item For $a_i \in L_{x_i}$, $i=1,2,3$ with the $x_i$ distinct, the order is prescribed by the order of the $x_i$'s;
        \item Within each $L_x$ the circular order comes from the linear order.
    \end{enumerate}
    If in addition $S$ and the $L_x$ are all countable and dense, then $T$ is countable and dense.
\end{lem}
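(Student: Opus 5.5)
The plan is to define the cyclic orientation function $c_T$ on $T$ directly by cases, then check the circular order axioms. Since the paper's definition is phrased via the linear orders $<_p$, the cleanest route is to define, for each $a \in L_x \subset T$, a linear order $<_a$ on $T\setminus\{a\}$, and then verify the compatibility condition on quartets.

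\textbf{Step 1: the cut orders.} Fix $a\in L_x$ and use the order $<_x$ on $S\setminus\{x\}$. Define $<_a$ on $T\setminus\{a\}$ as the concatenation of three blocks, in this order:
\begin{equation*}
\{b\in L_x : b>a\},\qquad \bigsqcup_{y\neq x} L_y \ \text{(ordered lexicographically by } <_x \text{ on the index, then by } L_y\text{)},\qquad \{b\in L_x: b<a\}.
\end{equation*}
This is the linear order obtained by "cutting the circle at $a$". Equivalently, $c_T(a,b,d)=c_S(x_a,x_b,x_d)$ when the indices are distinct. When all three points lie in the same $L_x$, $c_T$ is the cyclic order induced by the linear order of $L_x$. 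When exactly two points share an index, $c_T$ is determined by the linear order within that $L_x$ together with its position relative to the third point. Properties (1)--(3) are then immediate from the construction: the order intervals between consecutive-index points cut out each $L_x$, which gives convexity.

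\textbf{Step 2: the quartet axiom.} Verifying the quartet condition is the only real work. I would avoid a direct case check by noting that a circular order is equivalent to a cocycle-type condition on $c$: $c$ is totally antisymmetric and nonzero on distinct triples, and satisfies the standard identity
\begin{equation*}
c(y,z,w)-c(x,z,w)+c(x,y,w)-c(x,y,z)=0
\end{equation*}
on distinct quadruples. Equivalently, and more concretely, every four points can be placed on an oriented circle so that their cyclic order is realized. I would check this realizability case by case according to the partition of the four indices into equal classes: types $1{+}1{+}1{+}1$, $2{+}1{+}1$, $2{+}2$, $3{+}1$, and $4$.

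The main obstacle is organizing this bookkeeping. The key reduction is that for finitely many points one can realize the picture geometrically. Choose an embedding of the finitely many relevant indices into $S^1$ respecting $c_S$, which is possible for a circular order on a finite set. Then replace each index point by a small arc, and embed the relevant finite subset of $L_x$ into that arc order-preservingly. The resulting finite configuration in $S^1$ induces exactly $c_T$ on those points, and the axioms hold in $S^1$. This handles all cases uniformly.

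\textbf{Step 3: countability and density.} Countability is clear, since $T$ is a countable union of countable sets. For density, take $a\neq b$ in $T$ and find a point strictly between them in $<_a$. If $a,b\in L_x$ with $a<b$, use density of $L_x$ to find an element of $L_x$ between them. If $b<a$, then after $a$ in $<_a$ come the other blocks, which are nonempty because $S$ has at least four elements. If $a\in L_x$ and $b\in L_y$ with $x\neq y$, density of $S$ gives some $z\in(x,y)_{c_S}$, and any element of the nonempty set $L_z$ works. This relies on each $L_z$ being nonempty, which should be assumed (a dense linear order is typically taken to be nonempty). If it is not assumed, one can instead take an element of $L_x$ above $a$, provided one exists.
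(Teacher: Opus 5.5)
Your argument is correct. The paper gives no proof (it calls the lemma ``a simple verification''); presumably the intended check is your lexicographic cut orders $<_a$ followed by a direct verification of the quartet condition across the index patterns $1{+}1{+}1{+}1$, $2{+}1{+}1$, $2{+}2$, $3{+}1$, $4$.

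Your route differs only in how that verification is organised. The quartet condition involves only the values of $c_T$ on four points, so it suffices that every finite subset of $T$ is realised by a configuration on $S^1$. You get this uniformly: embed the finitely many indices via $c_S$, fatten each one to a small positively oriented arc, and place the relevant points of $L_x$ in that arc order-preservingly. The mixed case of two points in one block also matches, since $c_T(a,b,d)=1$ iff $a<b$, which agrees with the arc picture. This trades the case analysis for the standard fact that a finite circularly ordered set embeds in $S^1$, and it is shorter and less error-prone than the bookkeeping.

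Two small points need tightening.
\begin{enumerate}
\item Your justification of convexity via ``consecutive-index points'' does not apply when $S$ is dense, which is the case used later. Instead, take $t\in L_w$ with $w\neq x$, and pick $u\notin\{x,w\}$ and $d\in L_u$. If $c_S(w,x,u)=1$, then $(t,d)_{c_T}$ contains $L_x$ but not $t$. Otherwise $c_S(u,x,w)=1$, and $(d,t)_{c_T}$ contains $L_x$ but not $t$. Hence $L_x$ is an intersection of order intervals.
\item Nonemptiness of the $L_x$ is needed throughout, not only for density. It guarantees $|T|\geq 4$, as the definition requires, and it ensures the intervals used in the previous point exist. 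It holds in both applications in the paper: there the $L_x$ are rational rays, and fibres isomorphic to the lexicographically ordered rational plane.
\end{enumerate}
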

The connection between circular orders and the circle $S^1$ is classical.

\begin{lem}[e.g. {\cite[pg. 2281]{Calegari2006UniversalCircles}}]
\label{lem:dense_circular_order}
    Let $S$ be a countable set with a dense circular order. Then $S$ is circular order isomorphic to $\bbQ/\bbZ \subset S^1$. In addition, every circular order-automorphism of $S$ extends uniquely to an element of $\operatorname{Homeo}_+(S^1)$.
\end{lem}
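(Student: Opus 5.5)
The statement has two parts. The first says that any countable dense circular order is isomorphic to $\bbQ/\bbZ$. The second says that order-automorphisms extend uniquely to orientation-preserving homeomorphisms of $S^1$.

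For the first part I would use a back-and-forth argument in the style of Cantor. I will reduce to the linear case by cutting at a point. Fix $p\in S$ and $0\in\bbQ/\bbZ$. The order $<_p$ on $S\setminus\{p\}$ is a countable linear order, and I will check that it is dense without endpoints.
\begin{itemize}
\item Density: if $x<_p z$ then $c(p,x,z)=1$. By density of the circular order, $(x,z)_c\neq\emptyset$, and a point $y$ there satisfies $x<_p y<_p z$. This uses the quartet compatibility axiom to compare $c(x,y,z)$ with the order $<_p$.
\item No endpoints: for $x\neq p$, the intervals $(p,x)_c$ and $(x,p)_c$ are nonempty. They supply points below and above $x$ in $<_p$.
\end{itemize}
The same holds for $(\bbQ/\bbZ)\setminus\{0\}\cong \bbQ\cap(0,1)$. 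Cantor's theorem then gives a linear order isomorphism $f:S\setminus\{p\}\to \bbQ\cap(0,1)$, which I extend by $p\mapsto 0$.

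It remains to check that this extension preserves the circular order. A triple containing $p$ is ordered exactly by $<_p$, so it is preserved by construction. For a triple $x,y,z$ avoiding $p$, its cyclic orientation is determined by its $<_p$-linear order via the axiom. I would verify this as a small lemma from the quartet condition: a triple is positively oriented iff, after cyclic rotation, it is $<_p$-increasing. The same rule holds on the circle side, so $f$ preserves $c$.

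For the second part, identify $S$ with $\bbQ/\bbZ$ and let $\phi$ be a circular order automorphism. For $\theta\in S^1$, define $\hat\phi(\theta)$ as follows. Cut the circle at some rational point $q\neq\theta$ and its image $\phi(q)$, so that $\phi$ becomes an increasing bijection of $\bbQ\cap(q,q+1)$ onto $\bbQ\cap(\phi(q),\phi(q)+1)$ in lifted coordinates. Set $\hat\phi(\theta)=\sup\{\phi(r): r<\theta\}$ in that cut interval.

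An increasing bijection between dense subsets of intervals extends to an increasing map with no jumps, since any jump would leave a gap containing a rational not in the image. Applying the same construction to $\phi^{-1}$ gives the inverse, so the extension is a homeomorphism. It is orientation preserving because it is increasing in the lifted coordinate. It is independent of the cut point $q$, because two cuts give maps that agree on the dense set $\bbQ/\bbZ$. Uniqueness follows because $\bbQ/\bbZ$ is dense in $S^1$ and continuous maps agreeing on a dense set are equal.

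The main obstacle is the bookkeeping in the step that converts between circular and linear orders. Specifically, one must show that the circular order is recovered from any single $<_p$, and that density transfers between them. The rest is standard.
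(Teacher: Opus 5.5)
Your proof is correct. The paper does not prove this lemma; it cites Calegari for it. Your argument is the standard one behind that citation: cut at a point to reduce to Cantor's theorem on countable dense linear orders without endpoints, then extend automorphisms by suprema, i.e.\ via the Dedekind completion.

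The one piece of bookkeeping you leave implicit is the cyclic invariance $c(x,y,z)=c(y,z,x)$. It does follow from the paper's quartet axiom, but only after bringing in a fourth point of $S$. You use it silently in two places:
\begin{enumerate}
\item In the density step, to see that the point $y\in(x,z)_c$ is not $p$, since $c(x,p,z)=-1$.
\item In the no-endpoints step, to turn $(x,p)_c\neq\emptyset$ into a point lying above $x$ in $<_p$.
\end{enumerate}
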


\begin{defn}
    A countable group $G$ is \emph{(left-)circularly orderable} if it admits a circular order which is preserved by the left action on itself.
\end{defn}
\noindent We also recall the following, even though we technically will not use it.

\begin{theo}[{\cite[Theorem 2.2.14]{Calegari2004Circular}}]
    A countable group $G$ is circularly ordered if and only if it admit a faithful action on $S^1$ by orientation-preserving homeomorphisms, which furthermore can be taken so that (at least) one point has a trivial stabiliser.
\end{theo}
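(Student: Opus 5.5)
The plan is to prove the two directions separately, with the forward direction resting on Lemma~\ref{lem:circular_and_linear} and Lemma~\ref{lem:dense_circular_order}.

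\emph{Circular order $\Rightarrow$ action.} Let $c$ be a left-invariant circular order on the countable group $G$. For each $g\in G$ set $L_g:=\bbQ$ with its usual order. Lemma~\ref{lem:circular_and_linear} then gives a circular order on $T=\bigsqcup_{g\in G}L_g = G\times\bbQ$, and $T$ is countable and dense.
\begin{enumerate}
\item Let $G$ act on $T$ by $k\cdot(g,q)=(kg,q)$. This action preserves the circular order on $T$. Indeed, by property (2) the order on triples from distinct blocks is read off from $c$, which is left-invariant. By properties (1) and (3), triples meeting a block twice are ordered by the block position and the linear order on $\bbQ$, and both are preserved.
\item Lemma~\ref{lem:dense_circular_order} identifies $T$ with $\bbQ/\bbZ\subset S^1$. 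Each $k\in G$ then gives a circular order-automorphism of $\bbQ/\bbZ$, which extends uniquely to $\rho(k)\in\Homeo_+(S^1)$.
\item Uniqueness of the extensions gives $\rho(k_1k_2)=\rho(k_1)\rho(k_2)$, so $\rho$ is a homomorphism.
\item The point $(e,0)\in T$ has trivial stabiliser, since $k\cdot(e,0)=(k,0)$. In particular $\rho$ is faithful.
\end{enumerate}
If $G$ is small, for instance with fewer than four elements so that the definition of a circular order is awkward, the argument still applies to $T$, which is infinite. Alternatively, finite cyclic groups can be handled directly by rotations.

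\emph{Action $\Rightarrow$ circular order.} Suppose $\rho:G\to\Homeo_+(S^1)$ is faithful and some point $p$ has trivial stabiliser.
\begin{enumerate}
\item The orbit map $g\mapsto\rho(g)p$ is injective.
\item Define $c(g_1,g_2,g_3)$ to be the cyclic orientation ($\pm1$, or $0$ if the points are not distinct) of the triple $(\rho(g_1)p,\rho(g_2)p,\rho(g_3)p)$ in $S^1$.
\item This pulls back the standard circular order on $S^1$ along an injection, so $c$ is a circular order on $G$.
\item It is left-invariant because each $\rho(k)$ preserves cyclic orientation.
\end{enumerate}

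The main obstacle is the clause ``can be taken so that one point has trivial stabiliser''. Read literally as part of the equivalence, it is already handled above: the forward direction produces such a point, and the converse uses it. If one wants the converse from faithfulness alone, I would fix a dense sequence $(x_i)_{i\ge1}$ in $S^1$. For distinct $g,h\in G$, faithfulness means $\rho(g)$ and $\rho(h)$ differ somewhere, hence by continuity and density they differ at some $x_i$. One then orders triples lexicographically: take the first index $i$ at which the three images $\rho(g_j)x_i$ are not all equal, and use the orientation there. The delicate point is the case where exactly two of the three images coincide at that first index. There one must refine by comparing the coinciding pair at a later index, then verify the quartet compatibility axiom and left-invariance. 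Left-invariance needs care because $\rho(k)$ does not preserve the sequence $(x_i)$. I expect that checking to be the only non-routine verification.
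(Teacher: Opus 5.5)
\textbf{Forward direction.} The paper does not prove this statement; it is quoted from Calegari and not used. The relevant comparison is therefore Section~\ref{sect:circle_Actions}, and your forward direction is exactly that mechanism: blow each point up into a copy of $\bbQ$ via Lemma~\ref{lem:circular_and_linear}, identify with $\bbQ/\bbZ$ by Lemma~\ref{lem:dense_circular_order}, and extend uniquely. It is correct, with one repair. Lemma~\ref{lem:circular_and_linear} only asserts that $T$ is dense when $S$ is dense, and a circular order on $G$ need not be dense (e.g.\ $\bbZ$ with the order induced by its linear order, or a finite cyclic group). You must check density of $T$ directly. It holds because $\bbQ$ has no largest element: for $a\in L_g$ and $b\neq a$, some $y\in L_g$ with $y>a$ (and $y<b$ if $b\in L_g$ with $a<b$) lies in $(a,b)_c$. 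Also, argue the invariance in your step (1) from the explicit lexicographic construction. Properties (1)--(3) alone do not determine the orientation of a triple with two points in one block.

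\textbf{The gap is in the converse.} Under your reading, with the free point as a hypothesis, your pullback argument is correct, but it makes the converse trivial. The words ``can be taken so that'' most naturally make the free point an extra feature of the action built in the forward direction. The equivalence itself, as in Calegari, is with the existence of an arbitrary faithful action, and such an action need not have a point with trivial stabiliser. For example, let $a$ (resp.\ $b$) be the identity outside a closed arc $I$ (resp.\ $J$), with $I\cap J=\emptyset$ and no fixed points in the interior of $I$ (resp.\ $J$). Then $\langle a,b\rangle\cong\bbZ^2$ acts faithfully, but every point is fixed by $a$ or by $b$. So you prove only a special case. For the general case you give a sketch in which the tie-breaking rule is unspecified and the quartet axiom is unchecked, and that is precisely the non-trivial content. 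Your diagnosis of the difficulty is also off. Since $\rho(kg)x_i=\rho(k)(\rho(g)x_i)$, any rule reading only coincidences and cyclic orientations of the values $\rho(g_j)x_i$ is automatically left-invariant. The fact that $\rho(k)$ moves the $x_i$ would matter only for right-invariance.

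\textbf{A fix using the paper's tools.} This route avoids triples entirely.
\begin{enumerate}
\item Fix $x_0\in S^1$ with stabiliser $H$. Then $H$ acts faithfully by increasing homeomorphisms on $S^1\setminus\{x_0\}\cong\bbR$. Pick a dense sequence $(y_i)$ there and declare $h\prec h'$ iff $\rho(h)y_i<\rho(h')y_i$ at the first index where they differ. This is a left order on $H$; only pairs are compared, so no ties arise.
\item The orbit $G/H\cong\rho(G)x_0\subset S^1$ carries a $G$-invariant circular order pulled back from $S^1$.
\item Each coset $gH$ carries the linear order $gh<gh'$ iff $h\prec h'$. This is independent of the representative $g$ by left-invariance of $\prec$.
\item Apply Lemma~\ref{lem:circular_and_linear} to $G=\bigsqcup_{gH\in G/H}gH$. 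The resulting circular order on $G$ is left-invariant for the same reason the order on the suspension $X$ in Section~\ref{sect:circle_Actions} is $G$-invariant.
\end{enumerate}
Alternatively, left-order the group of lifts of $\rho(G)$ to homeomorphisms of $\bbR$ commuting with $x\mapsto x+1$, again by the dense-sequence rule. Then descend to $G$ using this central, cofinal translation.
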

\noindent To obtain a discontinuous action of $G = \SL_2(\bbQ)$ by homeomorphisms on $S^1$ we will construct a circular order on it. Let $G$ act linearly on $\bbR^2$, and consider the vector $e_1 = (1,0)^T$ which has orbit $G\cdot e_1 = \bbQ^2\backslash \{0\}$. Letting $U = G_{e_1}=\{u(t) = (\begin{smallmatrix}
    1 & t \\ 0&1
\end{smallmatrix}):t\in \bbQ\}$ we have that the orbit map $G/U \to \bbQ^2\backslash \{0\}:gU \to ge_1$ is a $G$-equivariant bijection.

\begin{lem}
    There is a circular order on $G/U$ that is dense and $G$-invariant.
\end{lem}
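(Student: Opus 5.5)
We construct the circular order on $G/U$ by transporting it through the $G$-equivariant bijection $G/U \to \bbQ^2\backslash\{0\}$, $gU\mapsto ge_1$. It therefore suffices to build a dense, $\SL_2(\bbQ)$-invariant circular order on $\bbQ^2\backslash\{0\}$, and we will obtain it from Lemma~\ref{lem:circular_and_linear}.

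\textbf{The base circle.} Take $S := \bbP^+(\bbQ^2)$, the set of rational directed rays. It embeds in $\bbP^+(\bbR^2)\cong S^1$ via $[v]\mapsto v/\|v\|$, and inherits the counterclockwise circular order. Concretely, for pairwise non-collinear rays we set
\[
c([u],[v],[w]) = +1
\]
exactly when the triple is positively oriented around the circle; this can be expressed through the signs of the determinants $\det(u,v)$, $\det(v,w)$, $\det(u,w)$. This order is dense and countable, since rational directions are dense in $S^1$. It is $\SL_2(\bbQ)$-invariant because each $g\in\SL_2(\bbQ)$ acts on $\bbP^+(\bbR^2)$ by an orientation-preserving homeomorphism (as $\det g>0$), and such maps preserve the cyclic order. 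Alternatively, invariance follows directly from $\det(gu,gv)=\det(u,v)$.

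\textbf{The fibres.} For each ray $r=[v]\in S$, let $L_r := \{tv : t\in\bbQ_{>0}\}$, the rational points on that ray. Order $L_r$ linearly by the norm, i.e.\ $tv<t'v$ iff $t<t'$; this is isomorphic to $(\bbQ_{>0},<)$, which is countable and dense. As a set, $\bbQ^2\backslash\{0\}=\bigsqcup_{r\in S}L_r$, because every nonzero rational vector lies on a unique rational ray. Lemma~\ref{lem:circular_and_linear} then gives a countable dense circular order on $\bbQ^2\backslash\{0\}$ in which each $L_r$ is convex, triples from distinct fibres are ordered by their rays, and the order within a fibre is the linear one.

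\textbf{$G$-invariance.} We check invariance under each $g\in G$ in three cases, according to how the triple $(a_1,a_2,a_3)$ meets the fibres.
\begin{enumerate}
    \item If all three points lie in distinct fibres, invariance follows from invariance on $S$, since $g$ maps $L_r$ onto $L_{gr}$.
    \item If all three lie in one fibre $L_r$, then $g$ maps $tv\mapsto t(gv)$. This is an order-preserving bijection $L_r\to L_{gr}$ for the parametrisations by $t$, so the linear orders are preserved. Note that $g$ does not preserve norms, but it is linear with positive scalar compatibility, which is all that is needed.
    \item If two points lie in one fibre and the third in another, we need to know how Lemma~\ref{lem:circular_and_linear} orders such a triple. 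Under the natural construction, the fibre $L_x$ is inserted in place of $x$, so for $a<b$ in $L_x$ and $d$ in another fibre we have $c(a,b,d)=+1$. This rule depends only on the linear order within the fibre and on fibre membership, so it is preserved by $g$ by cases 1 and 2.
\end{enumerate}

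\textbf{Main obstacle.} The delicate point is case 3. Lemma~\ref{lem:circular_and_linear} is stated without an explicit construction, so we must fix the construction and note that it is canonical: it is determined by the circular order on $S$ together with the linear orders on the fibres. Once this is in place, any bijection of $T$ that maps fibres to fibres, induces a circular-order automorphism of $S$, and is order-preserving on each fibre is automatically an automorphism of $T$. We will state and verify this naturality explicitly, by writing out the formula for $c$ on $T$, after which invariance under $G$ is immediate from the three cases above.
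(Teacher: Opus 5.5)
Your proposal is correct and follows the paper's proof. It uses the same base $S$ of rational rays with the circular order inherited from $S^1$, the same fibres of rational points on each ray ordered by the scalar $t$, Lemma~\ref{lem:circular_and_linear}, and the pullback along $gU\mapsto ge_1$. Your three-case check, and your remark that the canonical insertion of each fibre must be fixed, spell out what the paper calls ``clearly $G$-invariant''. That remark is justified: the lemma as stated does not rule out rotating a fibre's linear order, and a rotated choice would break invariance.
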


\begin{proof}
    For any $v \in \bbQ^2\backslash \{0\}$ consider the unit-vector representative $[v] = v/\|v\|$ and the ray $L_{[v]} = \bbR_{>0} v \cap \bbQ^2\backslash \{0\}$. Then $S = \{[v]:v\in \bbQ^2\backslash \{0\}\}$ inherits a dense circular order from the circle, and each $L_{[v]}$ comes with the standard (dense) total order. Applying Lemma~\ref{lem:circular_and_linear} we obtain a dense circular order on $\bbQ^2 \backslash \{0\}$ which is clearly $G$-invariant, so we can pull it back to $G/U$.
\end{proof}
\noindent Consider now $\bbQ^2$ with the lexicographic order $\ell$, that is $(a,b) <_\ell (a',b')$ if either $a<a'$ or $a=a'$ and $b<b'$. $U$ acts on $\bbQ^2$ by translation via $u(t)(a,b) = (a+t,b)$. This preserves the order, and notice that $u(t)0 \notin (-e_2,e_2)_\ell$ for all $t \neq 0$. Define the suspension 
\begin{equation*}
    X = G \times \bbQ^2/U
\end{equation*}
where the quotient is by $(gu,v)\sim (g, uv)$, and we denote this equivalence class by $[g,v]$. There is a natural projection $\pi:X\to G/U$.
\begin{lem}
    Every fiber of $\pi$ has a well-defined linear order that is isomorphic to $(\bbQ^2,\ell)$.
\end{lem}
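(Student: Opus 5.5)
The plan is to identify each fiber of $\pi$ with $\bbQ^2$ by choosing a base point in the fiber, and then to check that the resulting order does not depend on that choice. The independence comes from the fact that $U$ acts on $(\bbQ^2,\ell)$ by order-preserving maps.

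Fix a coset $gU \in G/U$. Its fiber is
\begin{equation*}
    \pi^{-1}(gU) = \{[g',v] : g' \in gU,\ v\in \bbQ^2\}.
\end{equation*}
Since $[gu,v] = [g,uv]$, every element of the fiber can be written as $[g,w]$ for some $w\in\bbQ^2$. So the map
\begin{equation*}
    \iota_g : \bbQ^2 \to \pi^{-1}(gU), \quad w \mapsto [g,w],
\end{equation*}
is surjective.

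Next we check that $\iota_g$ is injective. Suppose $[g,w] = [g,w']$. By definition of the quotient there is some $u\in U$ with $(g,w') = (gu, u^{-1}w)$; this is the relation $\sim$, which is compatible with the free right action of $U$ on $G$. Then $gu = g$, so $u = e$ and $w = w'$. Hence $\iota_g$ is a bijection, and we define the order on the fiber by transporting $\ell$:
\begin{equation*}
    [g,w] < [g,w'] \iff w <_\ell w'.
\end{equation*}

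It remains to check that this order does not depend on the representative $g$ of the coset. Any other representative has the form $gu$ with $u=u(t)\in U$. The two parametrisations are related by $\iota_{gu}(w) = [gu,w] = [g,uw] = \iota_g(uw)$, that is,
\begin{equation*}
    \iota_g^{-1}\circ\iota_{gu} = u,
\end{equation*}
which is translation by $(t,0)$. Translation preserves the lexicographic order: if $a<a'$ then $a+t<a'+t$, and if $a=a'$ the second coordinates are unchanged. So the two transported orders coincide.

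This gives a well-defined linear order on each fiber, and each $\iota_g$ is an order isomorphism $(\bbQ^2,\ell)\to\pi^{-1}(gU)$. The same computation also shows that left multiplication by $G$, given by $h[g,v]=[hg,v]$, maps fibers to fibers order-preservingly, which will be useful afterwards. The only real point in the argument is the independence of the representative, which reduces to $U$ acting on $(\bbQ^2,\ell)$ by order automorphisms.
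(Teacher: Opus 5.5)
Your proposal is correct and follows the paper's argument. You parametrise the fiber by $w\mapsto[g,w]$ and note that replacing $g$ by $gu$ changes the coordinate by the translation $w\mapsto uw$, which preserves $\ell$; you also spell out the injectivity of this parametrisation via freeness of the right $U$-action on $G$, which the paper takes for granted.
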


\begin{proof}
    We have for any $g \in G$ a bijection $\bbQ^2 \to \pi^{-1}(gU):v \mapsto [g,v]$. Replacing $g$ by $gu$ changes the fiber coordinate by $v \mapsto uv$, but this is an order-preserving map; so the order is independent of the choice of $g$.
\end{proof}
\noindent Now apply Lemma~\ref{lem:circular_and_linear} to the dense circular order on $G/U$, and the linear order on each fiber over $G/U$, to obtain a dense circular order on $X$. By Lemma~\ref{lem:dense_circular_order} it is circular order isomorphic to $\bbQ/\bbZ$, say via some map $\phi:X \to \bbQ/\bbZ$. Since it is circular order-preserving, order intervals in $X$ map to the intersections of open intervals in $S^1$ with $\bbQ/\bbZ$.

$G$ acts on $X$ by circular order-automorphisms. Indeed, note that for any $g,h\in G$ and $u\in U$ we can write $h[g,v]=[hg,v] = [hgu^{-1}u,v] = [hgu^{-1},uv]$. This clearly preserves the circular order on $G/U$, and the order on each fiber of $\pi$. So by Lemma~\ref{lem:dense_circular_order} the action of each $g$ on $\phi(X)$ extends uniquely to some $\rho(g) \in \operatorname{Homeo}_+(S^1)$. Uniqueness of extension implies that $\rho(gh) = \rho(g)\rho(h)$, so we have an action $\rho:\SL_2(\bbQ) \to \Homeo_+(S^1)$.

We finally show that it is discontinuous. For this, let $x_- =\phi([e,-e_2])$, $x_0 = \phi([e,0])$, and $x_+ = \phi([e,e_2])$, where $e\in G$ denotes the identity element. Notice that $(x_-,x_+)$ contains $x_0$. However, for any $t\in \mathbb{Q}$ we have that $\rho(u(t))x_0 = \phi([u(t),0]) = \phi([1,u(t)0]) \notin (x_-,x_+)$ for $t\neq 0$, as was noted previously. But since $u(1/n) \to \Id$ in $\SL_2(\bbR)$, the fact that $\rho(u(1/n))x_0$ never enters the fixed neighbourhood $(x_-,x_+)$ of $x_0$ proves discontinuity.

\begin{rem}
     This action is not conjugate to any of the ones considered in Theorem~\ref{theo:uncountably_many}. Indeed, the $G$ action on $X$ is free, so no non-identity element fixes an open interval in $S^1$. In the construction in Theorem~\ref{theo:uncountably_many}, non-identity elements of $U$ fix the discs $D_x$ pointwise.
\end{rem}

\section{Final observations}
\label{sect:final_observations}

\noindent We record in this section some final observation surrounding Conjecture~\ref{conj:extensions_of_Actions} and actions by homeomorphisms. We claim no originality of ideas or methods here; indeed we expect them to be known to the experts, but have not seen them explicated anywhere. 

The first is that for actions of $\SL_{n+1}(\bbQ)$ (for $n \geq 2$) on manifolds of dimension $\leq n$, the regularity matters. Indeed, if $M$ is a compact manifold of dimension $< n$, a simple corollary of \cite[Theorem 1]{brown_c1_2022} (see the remark above the proof of Theorem~C in \cite{cohen2026actionslatticespadicsarithmetic}) is that any action $\alpha:\SL_{n+1}(\bbQ) \to \Diff^1(M)$ must be trivial. At dimension $n$ the projective action exists, and so we record the following proposition which follows from \cite[Theorem 1.1]{brown_boundary_2024}.

\begin{prop}
\label{prop:C^r_regularity}
    Let $r>1$ and let $\rho:\SL_{n+1}(\bbQ) \to \Diff^r(M)$ be a homomorphism, where $M$ is a compact manifold of dimension $n$ and $n \geq 2$. Then either $\rho$ is trivial, or $M = S^n$ or $\bbR\bbP^n$ and there is some $f \in \Diff^r(M)$ that conjugates $\rho$ to the standard projective action. In the latter case, in particular $\rho$ extends to a continuous homomorphism from $\SL_{n+1}(\bbR)$.
\end{prop}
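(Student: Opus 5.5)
This follows from the main theorem of Brown, Rodriguez-Hertz and Wang \cite[Theorem 1.1]{brown_boundary_2024}, together with the triviality result in lower dimensions. The plan is to reduce to a statement about a lattice. Fix the lattice $\Gamma := \SL_{n+1}(\bbZ) \leq \SL_{n+1}(\bbR)$, which is contained in $\SL_{n+1}(\bbQ)$, and restrict $\rho$ to it. Since $\SL_{n+1}(\bbQ)$ is simple modulo its finite centre for $n\geq 2$, $\rho$ is either trivial or has finite kernel. In the latter case $\rho|_\Gamma$ is infinite, because its kernel is finite.

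\textbf{Applying the classification.} Theorem 1.1 of \cite{brown_boundary_2024} classifies $C^r$ actions ($r>1$) of lattices in $\SL_{n+1}(\bbR)$ on $n$-manifolds: either the action has finite image, or $M$ is $S^n$ or $\bbR\bbP^n$ and the action is $C^r$-conjugate, after passing to a finite index subgroup, to the standard projective action. Since $\rho|_\Gamma$ has infinite image, we would obtain $f\in\Diff^r(M)$ and a finite index $\Gamma'\leq \Gamma$ such that $f\rho(\gamma)f^{-1} = \sigma(\gamma)$ for all $\gamma\in\Gamma'$.

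\textbf{Upgrading from $\Gamma'$ to all of $\SL_{n+1}(\bbQ)$.} Replace $\rho$ by $\rho' := f\rho f^{-1}$. For $g\in \SL_{n+1}(\bbQ)$, the group $\Gamma'' := \Gamma'\cap g^{-1}\Gamma' g$ is again of finite index in $\Gamma$, because $\Gamma$ and $g\Gamma g^{-1}$ are commensurable. For $\gamma\in \Gamma''$ we have
\begin{equation*}
\rho'(g)\sigma(\gamma)\rho'(g)^{-1} = \rho'(g\gamma g^{-1}) = \sigma(g\gamma g^{-1}) = \sigma(g)\sigma(\gamma)\sigma(g)^{-1}.
\end{equation*}
Hence $\phi := \sigma(g)^{-1}\rho'(g)$ commutes with $\sigma(\Gamma'')$. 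By Borel density, $\Gamma''$ is Zariski dense, and its projective action is minimal and strongly proximal. A homeomorphism commuting with it must therefore commute with the action of its closure in the relevant sense, and it must preserve attracting fixed points. Concretely, every proximal $\gamma\in\Gamma''$ has a unique attracting fixed point, which $\phi$ must fix. These attracting points are dense, so $\phi = \Id$ on $S^n$.

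On $\bbR\bbP^n$ the same argument applies. On $S^n$ one has to allow for $\phi = -\Id$, which commutes with everything. This ambiguity is resolved by noting that $g\mapsto \sigma(g)^{-1}\rho'(g)$ takes values in the centraliser $\{\pm\Id\}$ and is a cocycle, hence a homomorphism $\SL_{n+1}(\bbQ)\to\bbZ/2$. Such a homomorphism is trivial because the group is perfect. Thus $\rho' = \sigma$, and $\sigma$ extends continuously to $\SL_{n+1}(\bbR)$.

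\textbf{Main obstacle.} The hard step is the black box itself, namely verifying that the hypotheses of \cite[Theorem 1.1]{brown_boundary_2024} match: the regularity $r>1$, the dimension $n$ equal to the critical dimension, and the lattice being nonuniform. The step I would check most carefully is the alternative to infinite image. Their theorem may say "finite image", and then I need the kernel argument above, using the finite centre, to rule it out unless $\rho$ is trivial. Once this is settled, the upgrade step is routine.
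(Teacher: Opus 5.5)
Your proof is correct and has the same structure as the paper's: reduce to $\rho(\SL_{n+1}(\bbZ))$ being infinite, apply the Brown--Rodriguez-Hertz--Wang classification, then use that $\SL_{n+1}(\bbQ)$ commensurates the lattice, that the centraliser of a finite-index subgroup is $\{\pm\Id\}$, and that $\SL_{n+1}(\bbQ)$ is perfect. The differences are only in the sub-steps. You get the reduction from simplicity of $\mathrm{PSL}_{n+1}(\bbQ)$, whereas the paper conjugates $E_{12}(m)$ by diagonal matrices. You identify the centraliser using attracting points of proximal elements, whereas the paper uses the common fixed points $\pm w$ of the unipotents $\gamma E_{1j}(m_j)\gamma^{-1}$ for primitive $w$. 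You also assume the conjugacy only on a finite-index subgroup, which the argument tolerates.
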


\begin{proof}
    Let $G = \SL_{n+1}(\bbQ)$ and $\Gamma = \SL_{n+1}(\bbZ)$, and recall that both groups are perfect (in fact, they are generated by elementary matrices $E_{ij}(t) = \Id + t\delta_{ij}$). Suppose first that $\rho(\Gamma)$ is finite, then for some $m \geq 1$ in fact $\rho(E_{12}(m)) = \Id$. Now for any $t \in \bbQ^\times$ let $a_t = \diag(t/m,1,m/t,\Id_{n-2})$ and notice that $a_tE_{12}(m)a_t^{-1} = E_{12}(t)$, and so $E_{ij}(t) \in \ker(\rho)$. The same holds for every $E_{ij}(t)$, showing that $\rho$ is trivial. So suppose from now on that $\rho(\Gamma)$ is infinite.

    By applying \cite[Theorem 1.1]{brown_boundary_2024} it follows that $M \cong S^n$ or $\bbR\bbP^n$, and there is a $C^r$-conjugacy $f\in \Diff^r(M)$ to the standard projective action of $\Gamma$ (that is, for every $\gamma \in \Gamma$ , $\sigma(\gamma) = f\rho(\gamma)f^{-1}$, where $\sigma$ is the standard projective action). Define $\rho_f:G \to \Diff^r(M) : g\mapsto f \rho(g) f^{-1}$, we need to show that this agrees with $\sigma$ on all of $G$.

    Recall that $G$ commensurates $\Gamma$, that is for every $g \in G$ the group $\Gamma_g := \Gamma \cap g^{-1}\Gamma g$ has finite index in $\Gamma$. For any $\gamma \in \Gamma_g$ since both $\gamma$ and $g\gamma g^{-1}$ are in $\Gamma$, we have that
    \begin{equation*}
        \rho_f(g)\sigma(\gamma)\rho_f(g^{-1}) = \rho_f(g)\rho_f(\gamma)\rho_f(g^{-1}) = \rho_f(g \gamma g^{-1}) = \sigma (g \gamma g^{-1}) = \sigma(g)\sigma(\gamma)\sigma(g^{-1})
    \end{equation*}
    and so $c(g) := \sigma(g)^{-1}\rho_f(g)$ centralises $\sigma(\Gamma_g)$. By Lemma~\ref{lem:centraliser_lemma} it must be either the identity or the antipodal map, but since $G$ is perfect we have that $c(g) = 1$ for all $g$ and the result follows. 
\end{proof}

We show the following for the $n$-sphere, it is easy to adapt it to the case of $\bbR\bbP^n$.

\begin{lem}
    \label{lem:centraliser_lemma}
    Let $\Gamma = \SL_{n+1}(\bbZ)$ and $\sigma:\Gamma \to \Diff^\omega_+(S^n)$ be the projective action. Let $\Lambda \leq \Gamma$ be a finite index subgroup. Let $c\in \Homeo(S^n)$ centralise $\sigma(\Lambda)$. Then $c = \Id$ or $c = -\Id$.
\end{lem}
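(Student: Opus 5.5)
We use the fixed points of hyperbolic elements of $\Lambda$, which are rigid, together with density of those fixed points.

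\textbf{Step 1: hyperbolic elements.} Call $\gamma \in \Lambda$ \emph{good} if it is diagonalisable over $\bbR$ with positive eigenvalues of pairwise distinct modulus. Such elements exist in any finite index subgroup of $\Gamma$: take a suitable power of an element of $\SL_{n+1}(\bbZ)$ whose characteristic polynomial is irreducible with $n+1$ real roots, then square it so all eigenvalues are positive. For a good $\gamma$ with eigenvectors $v_0,\dots,v_n$, the fixed point set of $\sigma(\gamma)$ on $S^n$ is the finite set $\{\pm v_i/\|v_i\|\}$. Moreover the unit vector $p_\gamma$ along the eigenvector of largest eigenvalue is an attracting fixed point: its basin contains an open dense set, namely the points whose $v_0$-coordinate is positive.

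\textbf{Step 2: $c$ preserves attracting points.} Since $c$ commutes with $\sigma(\gamma)$, it permutes the finite set $\operatorname{Fix}(\sigma(\gamma))$. Conjugacy by a homeomorphism also preserves the topological type of a fixed point, for instance whether its basin of attraction has nonempty interior. The only fixed points whose basin has nonempty interior are $\pm p_\gamma$: every other fixed point has a saddle or repelling direction, so its stable set is contained in a proper projective subspace intersected with $S^n$. Hence $c(p_\gamma) \in \{p_\gamma, -p_\gamma\}$.

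\textbf{Step 3: density and continuity.} The set $P = \{p_\gamma : \gamma \in \Lambda \text{ good}\}$ is dense in $S^n$. It is $\Lambda$-invariant, since $p_{\lambda\gamma\lambda^{-1}} = \sigma(\lambda)p_\gamma$, and $\Lambda$ acts minimally on $S^n$ because $\Lambda$ is Zariski dense and $\SL_{n+1}(\bbR)$ acts transitively on $S^n$. Define $\epsilon: P \to \{\pm1\}$ by $c(p) = \epsilon(p)p$. Then $c$ agrees with $\Id$ on $\epsilon^{-1}(1)$ and with $-\Id$ on $\epsilon^{-1}(-1)$. Let $A$ be the closure of $\epsilon^{-1}(1)$ and $B$ the closure of $\epsilon^{-1}(-1)$. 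By continuity, $c = \Id$ on $A$ and $c = -\Id$ on $B$, so $A \cap B = \emptyset$ because no point equals its own antipode. Since $P$ is dense, $A \cup B = S^n$. These are two disjoint closed sets covering the connected space $S^n$, so one of them is empty and $c = \pm\Id$.

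The main obstacle is Step 2, showing that $c$ cannot send the attractor $p_\gamma$ to some other fixed point. The topological invariant "basin with nonempty interior" handles this cleanly, provided one checks that every non-extremal fixed point $\pm v_i$ has stable set contained in $\{v_0\text{-coordinate}=0\}$.
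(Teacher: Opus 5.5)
Step 2 is fine: since $c$ commutes with $\sigma(\gamma)$, the homeomorphism $c$ carries stable sets to stable sets. However, two other justifications are wrong as written, although both conclusions are true.

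\emph{Step 3.} Zariski density of $\Lambda$ together with transitivity of $\SL_{n+1}(\bbR)$ does \emph{not} imply minimality: a Zariski-dense Schottky subgroup of $\SL_2(\bbR)$ preserves a Cantor set in $S^1$. What is really needed is that $\Lambda$ is a lattice, or a direct arithmetic argument, as follows.
\begin{enumerate}
\item Every good element of $\Gamma$ has a power in $\Lambda$ with the same attracting point, so $P$ is in fact $\Gamma$-invariant.
\item $\Gamma$ acts minimally on $S^n$. Let $F\subset S^n$ be nonempty, closed and $\Gamma$-invariant. After applying a signed permutation matrix in $\Gamma$, $F$ contains a point $z$ with $z_2\neq 0$. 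Letting $m\to+\infty$ or $m\to-\infty$ according to the sign of $z_2$ gives $\sigma(E_{12}(m))z\to e_1$, so $e_1\in F$.
\item Hence $F\supseteq\overline{\sigma(\Gamma)e_1}=S^n$, because $\sigma(\Gamma)e_1$ is the set of primitive integral directions.
\end{enumerate}

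\emph{Step 1.} An irreducible characteristic polynomial with real roots does not exclude a pair of roots $\lambda,-\lambda$. For example, the companion matrix of $x^4-4x^2+1$ lies in $\SL_4(\bbZ)$, and no power of it has eigenvalues of pairwise distinct modulus. There are two easy repairs:
\begin{enumerate}
\item Pick a unit of a totally real field whose conjugates have distinct absolute values, using Dirichlet's unit theorem.
\item Observe that Step 2 only uses proximality, e.g.\ for a power of $\diag(A,\Id_{n-1})$ with $A\in\SL_2(\bbZ)$ and $\operatorname{tr}A>2$. The fixed points other than $\pm p_\gamma$ lie in the invariant complementary hyperplane, and that hyperplane contains their stable sets.
\end{enumerate}
A minor slip: the basin of $p_\gamma$ is an open hemisphere, not a dense set; only its nonempty interior matters.

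With these repairs your argument is correct, but it follows a different route from the paper's. The paper uses no dynamics at all:
\begin{enumerate}
\item For a primitive $v$, write $w=v/\|v\|=\sigma(\gamma)e_1$ with $\gamma\in\Gamma$.
\item Take $m_j\geq1$ with $\gamma E_{1j}(m_j)\gamma^{-1}\in\Lambda$ for $j=2,\dots,n+1$.
\item The common fixed set of these unipotents on $S^n$ is exactly $\{\pm w\}$, so $c(w)\in\{\pm w\}$.
\item Density of the primitive directions plus connectedness finishes; this is your closing argument, which the paper leaves implicit.
\end{enumerate}
So the paper needs no hyperbolic elements, no basins and no minimality statement, because its dense set is explicit.

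Your route, based on attracting fixed points of proximal elements, is more robust. It only needs proximal elements of $\Lambda$ with dense attracting points, so it would also handle cocompact lattices in $\SL_{n+1}(\bbR)$. Those contain no nontrivial unipotent elements, so the paper's argument is unavailable there. In the present arithmetic setting, though, the cleanest justification of your density step uses precisely the unipotents and rational directions that the paper exploits directly.
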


\begin{proof}
    Consider any primitive vector $v \in \bbZ^{n+1}$, let $w = v/\|v\|$, and choose a matrix $\gamma \in \Gamma$ such that $\sigma(\gamma) e_1 = w$. Since $\Lambda$ has finite index $j = 2,\ldots,n+1$ there are $m_j \geq 1$ such that
    \begin{equation*}
        \gamma E_{1j}(m_j)\gamma^{-1} \in \Lambda.
    \end{equation*}
    It is easy to see that the common fixed points on $S^n$ of these maps are $\left\{\pm w\right\}$, and so if $c$ centralises $\Lambda$ it must satisfy $c\left(w\right) \in \left\{\pm w\right\}$. Since $\{v/\|v\|:v$ is primitive$\}\subset S^n$ is dense and $S^n$ is connected, the result follows.
\end{proof}

Next, we remark that the Bowden-Mann construction (Proposition~\ref{prop:Bowden_Mann}) and our specification of it (Proposition~\ref{prop:Bowden_mann_specified}) can give discontinuous actions for $\SL_{n+1}(\bbQ)$ by homeomorphism on $S^n \times N$ for any compact manifold $N$, and on $S^k$ ($k \geq n$) and $\bbB^m$ ($m\geq n+1$) respectively by taking the suspension or cone respectively over the actions on $S^{k-1}$, by recursion. It might be tempting therefore to embed $\bbD^m$ in any compact manifold $M$, and extend the action by the identity outside $\bbD^m$ to obtain a discontinuous action on $M$. However this is not possible, by a classical result of Newman.

\begin{prop}
    Let $G$ be a group with a torsion element $g$ whose normal closure is all of $G$, and $M$ a connected manifold. If a homomorphism $\rho:G \to \Homeo(M)$ is the identity on a nonempty open subset of $M$, then $\rho$ is trivial.
\end{prop}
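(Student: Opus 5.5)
The plan is to invoke Newman's theorem: a nontrivial periodic homeomorphism of a connected manifold cannot be the identity on a nonempty open set. Let $g$ be the given torsion element, of order $m$, and let $U\subset M$ be a nonempty open set on which every $\rho(h)$, $h\in G$, restricts to the identity. Then $\rho(g)$ is a periodic homeomorphism of $M$ with $\rho(g)^m=\Id$. Newman's theorem in the form ``a periodic homeomorphism of a connected manifold that fixes a nonempty open set pointwise is the identity'' gives $\rho(g)=\Id$.

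Next, $\ker\rho$ is a normal subgroup of $G$ containing $g$. It therefore contains the normal closure of $g$, which by hypothesis is all of $G$, so $\rho$ is trivial. For instance, in $\SL_{n+1}(\bbQ)$ one can take $g=-\Id$ in even size, or more generally a suitable finite-order element whose normal closure is everything, using that $\PSL$ is simple. The statement, however, only requires the abstract hypothesis.

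The only substantive step is the correct form of Newman's theorem. Classically it says that for a compact (or at least connected) manifold $M$ there is an $\varepsilon>0$ such that no nontrivial action of a finite group has all orbits of diameter less than $\varepsilon$. The version used here follows from it: if a nontrivial finite cyclic group $\langle f\rangle$ acts with $f$ fixing an open set, pass to a prime-order power $f^k\neq \Id$. Its fixed set is closed and has nonempty interior; it is also a proper subset, since $f^k\neq\Id$ and $M$ is connected. Choose a boundary point $p$ of the interior of the fixed set and a small chart ball around $p$. Smith theory or Newman's local argument then shows that a $\bbZ/q$ action on a manifold fixing an open set near $p$ must fix a neighbourhood of $p$, contradicting $p$ being a boundary point. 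This is the main obstacle, and I would cite it directly (e.g.\ Newman 1931, or Dress's proof) rather than reprove it.
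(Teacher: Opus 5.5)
Your argument is the paper's: Newman's theorem gives $\rho(g)=\Id$, and since $\ker\rho$ is normal it contains the normal closure of $g$, which is all of $G$. One correction to your aside: $-\Id$ is central in $\SL_{n+1}(\bbQ)$, so its normal closure is only $\{\pm\Id\}$ and it does not satisfy the hypothesis; you need a non-central torsion element, such as the paper's $\left(\begin{smallmatrix}0&-1\\1&0\end{smallmatrix}\right)$ placed in the top-left corner.
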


\noindent This applies to $\SL_2(\bbQ)$ by considering $g =(\begin{smallmatrix}
    0 & -1 \\ 1 & 0
\end{smallmatrix})$, and similarly to any $\SL_n(\bbQ)$ by considering $g$ placed in the top left-hand corner.

\begin{proof}
    Newman's Theorem \cite[Theorem 2]{Newman1931} says that a periodic homeomorphism of a connected manifold without boundary which fixes a non-empty open set pointwise must be the identity. This applies to $\rho(g)$, and so $g \in \ker \rho$ and the result follows.
\end{proof}
\noindent In some cases there are also genuine topological obstructions to constructing any actions. Recall that $G = \SL_{n+1}(\bbQ)$ is not residually finite, in fact such $G$ has no finite quotients, by the argument we gave at the start of the proof of Proposition~\ref{prop:C^r_regularity}. Using this, the fact that mapping class groups of compact surfaces are residually finite (a result of Grossman, \cite{Grossman1974}), and some facts about torsion elements in $\Homeo_0(\Sigma_g)$, we can rule out actions by homeomorphisms on compact hyperbolic surfaces.

\begin{prop}
    Let $G$ be a countable group admitting no finite quotients, and with a torsion element $h \in G$ whose normal closure is all of $G$. Let $\Sigma_g$ be a compact surface of genus $g \geq 2$. Then any homomorphism $\rho:G \to \Homeo(\Sigma_g)$ is trivial.
\end{prop}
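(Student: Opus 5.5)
The plan is to compose $\rho$ with the projection $\Homeo(\Sigma_g)\to \operatorname{Mod}^{\pm}(\Sigma_g) = \Homeo(\Sigma_g)/\Homeo_0(\Sigma_g)$ to the (extended) mapping class group. By Grossman's theorem \cite{Grossman1974} this group is residually finite. Let $\bar\rho:G\to \operatorname{Mod}^\pm(\Sigma_g)$ denote the composite. If $\bar\rho(G)$ were nontrivial, we could pick $1\neq \bar\rho(k)$ and a finite quotient of $\operatorname{Mod}^\pm(\Sigma_g)$ in which $\bar\rho(k)$ survives. This would give a nontrivial finite quotient of $G$, contradicting the hypothesis. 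Hence $\rho(G)\subseteq \Homeo_0(\Sigma_g)$, the identity component.

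Next I use the torsion element. The element $\rho(h)$ is a periodic homeomorphism of $\Sigma_g$ isotopic to the identity. The classical fact I plan to use is that for $g\geq 2$, a finite-order homeomorphism isotopic to the identity is the identity. One route is Nielsen realisation, or more elementarily the following argument. A nontrivial periodic homeomorphism of a surface is conjugate to an isometry of some hyperbolic metric; this is the Kerékjártó–Eilenberg/Brouwer theorem that periodic surface homeomorphisms are conjugate to conformal maps. Since $\chi(\Sigma_g)<0$ and $\Homeo_0(\Sigma_g)$ is simply connected ($\Homeo_0(\Sigma_g)$ is contractible by Hamstrom), the isotopy to the identity forces the induced action on $\pi_1$ to be trivial up to inner automorphisms. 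An isometry of a hyperbolic surface acting trivially on $\pi_1$ up to conjugacy is the identity, because $\operatorname{Isom}(\Sigma_g,\text{hyp})\to\operatorname{Out}(\pi_1\Sigma_g)$ is injective (the centre of $\pi_1\Sigma_g$ is trivial, so lifts commuting with deck transformations act on $\partial\bbH^2$ trivially). Therefore $\rho(h)=\Id$.

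Finally, $h\in\ker\rho$, and $\ker\rho$ is normal. Since the normal closure of $h$ is all of $G$, we conclude that $\rho$ is trivial.

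The main obstacle is the middle step: justifying cleanly that a torsion element of $\Homeo_0(\Sigma_g)$ is trivial. I would cite it as a known fact (e.g.\ from Farb–Margalit's discussion of Nielsen realisation, or the injectivity of finite subgroups of $\Homeo(\Sigma_g)$ into $\operatorname{Mod}(\Sigma_g)$ for $g\geq 2$), rather than reprove the conjugacy of periodic maps to isometries. The first and last steps are formal.
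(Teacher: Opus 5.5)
Your proof is correct and is essentially the paper's argument: Grossman's residual finiteness of the mapping class group, together with the absence of finite quotients, forces $\rho(G)\subseteq\Homeo_0(\Sigma_g)$. Torsion-freeness of $\Homeo_0(\Sigma_g)$, which the paper simply cites from Jaco--Shalen and you instead sketch via Ker\'ekj\'art\'o--Eilenberg, then gives $\rho(h)=1$, and normal generation by $h$ finishes.

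Two small points about that sketch. The appeal to Hamstrom is unnecessary, since any isotopy to the identity already induces an inner automorphism of $\pi_1\Sigma_g$. Also, the injectivity of $\operatorname{Isom}\to\operatorname{Out}(\pi_1\Sigma_g)$ rests on the triviality of the centraliser of the deck group in the isometry group of the hyperbolic plane (fixed points of hyperbolic deck transformations are dense in the boundary circle), not on the triviality of the centre of $\pi_1\Sigma_g$.
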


\noindent As noted previously, this applies to $G = \SL_{n+1}(\bbQ)$ for all $n \geq 1$.

\begin{proof}
    Consider the induced map to the mapping class group $\overline{\rho}: G \to \operatorname{Mod}(\Sigma_g)$. The latter is residually finite \cite{Grossman1974}, and since $G$ admits no finite quotients we must have that $\overline{\rho}(G) = 1$, that is, $\rho(G) \subset \Homeo_0(\Sigma_g)$, the group of homeomorphisms of $\Sigma_g$ isotopic to the identity. By Lemma~\ref{lem:isotopic_is_torsion_free} this group is torsion-free, so $\rho(h) = 1$. By assumption on $h$, this completes the proof.
\end{proof}

\begin{lem}[e.g. {\cite[Lemma 4.4]{JacoShalen1977}}]
    \label{lem:isotopic_is_torsion_free}
    Let $\Sigma_g$ be a compact surface of genus $g \geq 2$. Then $\Homeo_0(\Sigma_g)$ is torsion-free.
\end{lem}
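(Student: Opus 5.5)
The plan is to reduce to the case of periodic homeomorphisms, where classical results apply. Let $f \in \Homeo_0(\Sigma_g)$ have finite order $m \geq 2$; we aim for a contradiction.

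The first step is to use Kerékjártó--Eilenberg: every periodic homeomorphism of a compact surface is topologically conjugate to an isometry of some hyperbolic metric when $g\geq 2$. Equivalently, $\langle f\rangle$ acts as a finite group of conformal automorphisms for some complex structure. This reduces the problem to finite-order isometries $f$ of a hyperbolic surface that are isotopic to the identity.

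The second step is to show that such an isometry is trivial. One route goes through the isotopy class. The isotopy class of $f$ is trivial, so $f$ acts trivially on $H_1(\Sigma_g;\bbZ)$. Hurwitz's classical theorem then applies: a nontrivial conformal automorphism of a compact Riemann surface of genus $g\geq 2$ acts nontrivially on $H_1(\Sigma_g;\bbC)$, hence on $H_1(\Sigma_g;\bbZ)$. The standard proof counts fixed points. By Lefschetz, trivial action on homology gives $L(f)=2-2g<0$. However, the fixed points of a nontrivial orientation-preserving periodic conformal map are isolated, each has index $+1$ (locally $f$ is a rotation $z\mapsto \zeta z$), and there are finitely many of them. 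So $L(f)\geq 0$, a contradiction.

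For a more intrinsic route that avoids Hurwitz, lift to the universal cover $\bbH^2$. Take an isotopy $f_t$ from $\Id$ to $f$ and lift it starting at $\Id$. This gives a lift $\tilde f$ commuting with every deck transformation of $\pi_1(\Sigma_g)$. Since $f$ is an isometry, $\tilde f$ is an isometry of $\bbH^2$ centralising a non-elementary Fuchsian group, and such an isometry is trivial. Hence $f=\Id$.

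The main obstacle is the first step: the conjugation of a periodic homeomorphism to an isometry, that is, the Kerékjártó--Eilenberg theorem, together with the fact that the isotopy hypothesis is preserved under this conjugation. Conjugation does preserve being in $\Homeo_0$, since $\Homeo_0$ is normal. I would cite this theorem rather than reprove it. Failing that, I would instead use Nielsen's result that periodic mapping classes are realised by isometries, combined with the uniqueness statement that a periodic homeomorphism isotopic to the identity is conjugate to its isometric representative.
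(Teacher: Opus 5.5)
The paper gives no proof of this lemma; it only cites Jaco--Shalen. Your proposal is a correct proof, modulo two classical inputs that are fine to cite here: the Ker\'ekj\'art\'o--Eilenberg conjugacy theorem and the Lefschetz--Hopf formula.

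Either half of your second step suffices on its own.
\begin{itemize}
\item The Lefschetz route works. Trivial action on homology gives $L(f)=2-2g<0$. A nontrivial orientation-preserving isometry has finitely many fixed points, each of index $\operatorname{sign}\det(\Id-df_p)=+1$ because $df_p$ is a nontrivial rotation, so $L(f)\geq 0$. This route uses the isotopy only through the action on $H_1$. It therefore proves the stronger Hurwitz-type fact that a nontrivial periodic homeomorphism acts nontrivially on homology.
\item The lifting route also works. Uniqueness of homotopy lifting makes $\tilde f$ commute with every deck transformation. Hence $\tilde f$ fixes the endpoints of the axis of every hyperbolic element of $\pi_1(\Sigma_g)$, and three fixed boundary points already force an isometry of $\bbH^2$ to be trivial.
\end{itemize}

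If you want less topological input, you can avoid the global conjugacy to an isometry and use only the planar theorem.
\begin{itemize}
\item $\tilde f^m$ is a lift of $f^m=\Id$ commuting with $\pi_1(\Sigma_g)$, so it is a central deck transformation. It is therefore trivial, since $\pi_1(\Sigma_g)$ is centreless for $g\geq 2$.
\item So $\tilde f$ is a periodic orientation-preserving homeomorphism of $\bbR^2$. If nontrivial, it is conjugate to a rotation (Brouwer--Ker\'ekj\'art\'o) and has exactly one fixed point.
\item Its fixed set is invariant under the free, nontrivial deck action, which is a contradiction.
\end{itemize}
This replaces the orbifold uniformisation hidden in Eilenberg's theorem by the classical statement on the plane.

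Finally, drop the fallback in your last paragraph. The ``uniqueness statement'' applied to the identity class, whose isometric representative is $\Id$, says exactly that a periodic homeomorphism isotopic to the identity is the identity. That is the lemma itself, so invoking it is circular.
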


\printbibliography[title=References]
\end{document}